\let\oldtocsection=\tocsection
\let\oldtocsubsection=\tocsubsection
\let\oldtocsubsubsection=\tocsubsubsection
\renewcommand{\tocsection}[2]{\hspace{0em}\oldtocsection{#1}{#2}}
\renewcommand{\tocsubsection}[2]{\hspace{2em}\oldtocsubsection{#1}{#2}}
\renewcommand{\tocsubsubsection}[2]{\hspace{2em}\oldtocsubsubsection{#1}{#2}}
\numberwithin{figure}{section}
\numberwithin{equation}{section}
\newtheorem{thm}{Theorem}[section]
\newtheorem{defn}[thm]{Definition}
\newtheorem{lmm}[thm]{Lemma}
\newtheorem{prp}[thm]{Proposition}
\newtheorem{remark}[thm]{Remark}
\newcommand{\tei}{Teichm\"uller}
\newcommand{\qc}{quasiconformal}
\newcommand{\idt}{of $\id$-type}
\newcommand{\nd}{\mathcal{N}_d}
\newcommand{\dist}{\operatorname{dist}}
\newcommand{\id}{\operatorname{id}}
\newcommand{\Id}{\operatorname{Id}}
\newcommand{\SV}{\operatorname{SV}}
\newcommand{\Crit}{\operatorname{Crit}}
\renewcommand{\Re}{\operatorname{Re\,}}
\renewcommand{\Im}{\operatorname{Im\,}}
\newcommand{\abs}[1]{\left| #1 \right|}
\newcounter{reminder}
\title{Infinite-dimensional Thurston theory\\and transcendental dynamics III:\\entire functions with escaping singular orbits\\in the degenerate case}
\author{Konstantin Bogdanov}
\begin{document}

\begin{abstract}
	We classify transcendental entire functions that are compositions of a polynomial and the exponential for which all singular values escape on disjoint rays. We focus on the case where the escape is degenerate in the sense that points from different singular orbits are arbitrarily close to each other. As in the general case we employ an iteration procedure on an \emph{infinite-dimensional} Teichm\"uller space, analogously to the Thurston's classical Topological Characterization of Rational Functions, but for an \emph{infinite} set of marked points.
\end{abstract}

\maketitle
	
\tableofcontents

\addtocontents{toc}{\protect\setcounter{tocdepth}{1}}

This is the third out of four articles publishing the results of the author's doctoral thesis \cite{MyThesis}. The machinery needed for the Thurston iteration on an infinite-dimensional \tei\ space is developed in \cite{IDTT1}. In this article and in \cite{IDTT2} we classify transcendental entire functions that are compositions of a polynomial and the exponential for which their singular values escape on disjoint dynamic rays. \cite{IDTT2} covers the general case of such escape when the distance between points on different singular orbits is bounded below, whereas in this paper we get rid of this restriction. In the fourth article we investigate continuity of such families of functions with respect to potentials and external addresses and use the continuity argument to extend our classification to the case of escape on the (pre-)periodic rays.

\section{Introduction}

In transcendental dynamics, as for polynomials, an important dynamically natural subset of the complex plane is the \emph{escaping set}. For a transcendental entire function $f$, its \emph{escaping set} is defined as
$$I(f)=\{z\in\mathbb{C}: f^n(z)\to\infty\text{ as }n\to\infty\}.$$
Since the Julia set is the boundary of $I(f)$, the Julia and Fatou sets can be reconstructed from $I(f)$, which is thus one of the most fundamental invariant sets for the dynamics of entire functions. It is proved in \cite{RRRS} for transcendental entire functions of bounded type and finite order that the escaping set of every function in the family is organized as the union \emph{dynamic rays} and their endpoints. About the escaping points that belong to a dynamic ray one says that they \emph{escape on rays}. This is the standard mode of escape for many important families of transcendental entire functions.

In \cite{IDTT2} we classified entire functions of the form $p\circ\exp$ where $p$ is a monic polynomial such that all their singular points escape on disjoint rays (i.e.\ every dynamic ray contains no more than one post-singular point) and distance between points on different singular orbits is bounded below. Denote
$$\mathcal{N}_d:=\{p\circ\exp: p\text{ is a monic polynomial of degree }d\}.$$ 
As for the exponential family \cite{SZ-Escaping}, for functions of the form $p\circ\exp$ the points escaping on rays can be described by their potential (or ``speed of escape'') and external address (or ``combinatorics of escape'', i.e.\ the sequence of dynamic rays containing the escaping orbit). So the classification was accomplished in terms of potentials and external addresses. To be more precise, for every function $f=p\circ\exp$ the classification was obtained within the \emph{parameter space} of $f$. The parameter space of an entire function $g$ is the set of entire functions $\varphi\circ f\circ\psi$ where $\varphi,\psi$ are \qc\ self-homeomorphisms of $\mathbb{C}$. For the exponential family $\mathcal{N}_1$ this definition of the parameter space coincides with the parametrization of $e^z+\kappa$ (up to affine conjugation). Earlier classification results for $\mathcal{N}_1$ were obtained in \cite{FRS,MarkusParaRays,MarkusThesis}.

In this article we want to extend this classification to the case when points from different singular orbits come arbitrarily close to each other, that is, to prove the following more general version of \cite[Classification~Theorem~1.4]{IDTT2}.

\begin{thm}[Classification Theorem]
	\label{thm:main_thm}
	Let $f_0\in\mathcal{N}_d$ be a transcendental entire function with singular values $\{v_i\}_{i=1}^m$. Let also $\{\underline{s}_i\}_{i=1}^m$ be exponentially bounded external addresses that are non-(pre-)periodic and non-overlapping, and $\{T_i\}_{i=1}^m$ be real numbers such that $T_i>t_{\underline{s}_i}$. Then in the parameter space of $f_0$ there exists a unique entire function $g=\varphi\circ f_0\circ\psi$ such that each of its singular values $\varphi(v_i)$ escapes on rays, has potential $T_i$ and external address $\underline{s}_i$.
	
	Conversely, every function in the parameter space of $f_0$ such that its singular values escape on disjoint rays is one of these. 
\end{thm}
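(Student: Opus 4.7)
The plan is to construct $g$ via a Thurston-style iteration on an infinite-dimensional \tei\ space whose marked points form the post-singular orbits prescribed by the external addresses $\underline{s}_i$ and the potentials $T_i$, extending the framework of \cite{IDTT2} to the now-degenerate post-singular set. First I build a \emph{topological model}: starting from $f_0$, apply a \qc\ surgery that redirects each singular orbit of $f_0$ onto the ray with external address $\underline{s}_i$ at potential $T_i$. The result is a topologically holomorphic map $F$, \qc\ equivalent to $f_0$, whose infinite post-singular set $P\subset\mathbb{C}$ realises the prescribed data on the nose.

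Next, using the machinery of \cite{IDTT1}, form the infinite-dimensional \tei\ space $\mathcal{T}$ of isotopy classes rel $P$ of \qc\ self-homeomorphisms of $\mathbb{C}$, and define the Thurston pullback operator $\sigma\colon\mathcal{T}\to\mathcal{T}$ by lifting complex structures through $F$. A fixed point $[\psi]$ of $\sigma$ produces an entire map $g=\varphi\circ f_0\circ\psi$ with singular data matching $(\underline{s}_i,T_i)$, and conversely any such $g$ gives a fixed point. The theorem thus reduces to existence and uniqueness of a fixed point of $\sigma$.

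The principal obstacle, and the key novelty over \cite{IDTT2}, is convergence of the Thurston iterates when points of $P$ on distinct orbits come arbitrarily close. In the non-degenerate case a uniform lower bound on pairwise hyperbolic distances between marked points controls the pullback; here that bound fails. I expect to need three ingredients working together: (a) the non-overlapping hypothesis on the $\underline{s}_i$, so that even mutually close marked points sit on distinct dynamic rays and are combinatorially separated by $F$-preimages; (b) the exponential boundedness of the addresses together with $T_i>t_{\underline{s}_i}$, giving quantitative control over the location of the escaping tails; and (c) an asymptotic contraction estimate for $\sigma$ in a metric on $\mathcal{T}$ adapted to the degenerate geometry of $P$, ensuring that limits of the iterates cannot escape $\mathcal{T}$ through collision of marked points.

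Uniqueness of $g$ then follows from uniqueness of the attracting fixed point of $\sigma$. For the converse, any $g$ in the parameter space of $f_0$ whose singular values escape on disjoint rays determines potentials and external addresses that automatically satisfy the hypotheses of the theorem (non-(pre-)periodic since the orbits escape, exponentially bounded by the structure of dynamic rays for $p\circ\exp$, and non-overlapping because the rays are disjoint); invoking the existence and uniqueness already proved identifies $g$ with the function constructed from that data.
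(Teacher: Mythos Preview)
Your overall architecture is right and matches the paper: build a quasiregular model $f=c\circ f_0$ whose singular orbits realise the prescribed data, set up the $\sigma$-map on the associated \tei\ space, and locate a fixed point. But your item (c) is where the real work lies, and the mechanism you sketch---an adapted metric and an asymptotic contraction estimate preventing collision---is not what is actually needed and is too vague to carry the argument.

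The paper does not change the metric. Strict contraction of $\sigma$ in the ordinary \tei\ metric on asymptotically conformal points is already supplied by \cite{IDTT1,IDTT2}; what fails in the degenerate case is the construction of a \emph{compact $\sigma$-invariant subset} $\mathcal{C}_f$ on which to run the iteration. In the non-degenerate setting one encloses each marked point outside a large disk $D_\rho$ in its own small disjoint disk; here that is impossible. The paper's replacement is the notion of a \emph{cluster}: marked points sharing both potential and first external-address entry. The key estimate (Proposition~\ref{prp:good_big_disk_around_origin_cluster_estimates}) shows that under one step of $\sigma$ the difference of two points in a cluster of potential $t$ is scaled by $1/F'(t)$ times a correction lying in a tiny annular sector $A_{t,0}$ about $1$. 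This ``negligible rotation within clusters'' is what lets one write down explicit invariant conditions (items (5) and (6) of Theorem~\ref{thm:invariant_subset}) governing the relative positions of clustered points, including the delicate case where a cluster involves the asymptotic-value orbit, and then verify that $\sigma$ preserves them. Compactness of $\mathcal{C}_f$ then goes through as in \cite{IDTT2}. Your ingredients (a) and (b) enter only as raw material for these concrete estimates; the non-overlapping hypothesis does not by itself yield combinatorial separation strong enough to rule out degeneration without the cluster machinery.

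A minor point on the converse: escape on a ray does not force the external address to be non-(pre-)periodic, so that clause of your argument is incorrect as stated. The paper obtains the converse directly from Theorem~\ref{thm:as_formula}.
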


Precise definitions of terms used the Classification Theorem are given in Subsection~\ref{subsec:escaping_set}. In particular, the condition that external addresses are non-(pre-)periodic and non-overlapping means that the singular values of $g$ escape on disjoint rays.

The second paragraph of the Classification Theorem~\ref{thm:main_thm} follows immediately from Theorem~\ref{thm:as_formula}, and we added it to the statement in order to show that we indeed have a \emph{classification} of function in $\mathcal{N}_d$ with singular values escaping on rays (subject to some restrictions).

We use the same general scheme of the proof developed in \cite{MyThesis,IDTT1,IDTT2} involving a special generalization of the \emph{Thurston's Topological Characterization of Rational Functions} \cite{DH,HubbardBook2} for infinitely many marked points.
\begin{enumerate}
	\item Construct a (non-holomorphic) ``model map'' $f$ for which the singular values escape on rays with the desired potential and external address. It will define the Thurston $\sigma$-map acting on the \tei\ space $\mathcal{T}_f$ of $\mathbb{C}$ with $P_f$ removed, where $P_f$ is the postsingular set of $f$ (Definition~\ref{defn:tei_space}).
	\item Construct a compact subset $\mathcal{C}_f$ of $\mathcal{T}_f$ which is invariant under $\sigma$.
	\item Prove that $\sigma$ is strictly contracting  in the \tei\ metric on $\mathcal{C}_f$.
	\item Prove that $\sigma$ has a unique fixed point in $\mathcal{C}_f$, and this point corresponds to the entire function with the desired conditions on its singular orbits. 
\end{enumerate}

Items (1),(3) and (4) and a big part of (2) are almost completely proved in \cite{IDTT1,IDTT2}, so in this article we will mainly be focused on the upgrade of (2).

The compact invariant subset $\mathcal{C}_f$ for the non-degenerate case is constructed in \cite{IDTT2} approximately as follows. For some initially chosen constant $\rho>0$, it is the set of points represented by \qc\ maps $\varphi$ that can be obtained from the identity via an isotopy $\varphi_u$ such that: 
\begin{enumerate}
	\item for very point $z\in P_f$ outside of $\mathbb{D}_\rho(0)$, $\varphi_u(z)$ is contained in a small disk $U_z$ around $z$ so that the disks $U_z$ are mutually disjoint; 
	\item inside of $\mathbb{D}_\rho(0)$ the distances between marked points $\varphi_u(z)$ are bounded from below;
	\item the isotopy type of $\varphi_u$ relative points of $P_f$ outside of $\mathbb{D}_\rho(0)$ is ``almost'' the isotopy type of identity; 
	\item the isotopy type of $\varphi_u$ relative points of $P_f$ inside of $\mathbb{D}_\rho(0)$ is not ``too complicated'', i.e.\ there are quantitative bounds on how many times the marked points ``twist'' around each other.
\end{enumerate}

Conditions (2),(3) create a separation of the plane into the two subsets: $\mathbb{D}_\rho(0)$ without much control on the behavior of $\varphi_u$ but containing only finitely many marked points, and the complement of $\mathbb{D}_\rho(0)$ where the homotopy information is trivial but there are infinitely many marked points.

However, it is not enough to consider such $\mathcal{C}_f$ for the case when points belonging to different singular orbits come arbitrarily close to each other: it might be simply impossible to construct such disjoint disks $U_z$ so that they stay invariant under the $\sigma$-map (see \cite[Remark~5.10]{IDTT2}). Instead, outside of $\mathbb{D}_\rho(0)$ we consider \emph{clusters} of points (Definition~\ref{defn:cluster}) --- finite subsets of $P_f$ formed by points that are ``very close'' to each other. Therefore, in condition (1) we require that the disjoint disks $U_z$ contain clusters of points rather than a single point and add the condition
\begin{enumerate}
	\item [(5)] If $U_z$ contains more than one point, then these points ``almost'' do not move relative to each other. 
\end{enumerate}
What exactly this ``almost'' means is described in Proposition~\ref{prp:good_big_disk_around_origin_cluster_estimates}.

Another rather technical addition comparing to the older $\mathcal{C}_f$ from \cite{IDTT2} is that we also need to partially control the clusters inside of $\mathbb{D}_\rho(0)$, that is, we need an additional estimate like in (2) to prevent the marked points come too close to the asymptotic value --- otherwise after one iteration of the $\sigma$ map they might be ``thrown'' towards $-\infty$, i.e.\ outside of $\mathbb{D}_\rho(0)$. The construction of $\mathcal{C}_f$ in full generality is the content of Theorem~\ref{thm:invariant_subset}.

\section{Prerequisites}

In this section we assemble some statements and definitions mainly from \cite{IDTT1,IDTT2} that will be needed in our constructions.

\subsection{Escaping set of functions in $\mathcal{N}_d$}

\label{subsec:escaping_set}

It follows from \cite{RRRS} that for every function $f\in\nd$ its escaping set is modeled on dynamic rays. In particular, that every escaping point is mapped on a dynamic ray after finitely many iterations. A related notion is a ray tail.

\begin{defn}[Ray tails]
	\label{dfn:ray_tail}
	Let $f$ be a transcendental entire function. A \emph{ray tail} of $f$ is a continuous curve $\gamma:[0,\infty)\to I(f)$ such that for every $n\geq 0$ the restriction $f^n|_\gamma$ is injective with $\lim_{t\to\infty}f^n(\gamma(t))=\infty$, and furthermore $f^n(\gamma(t))\to\infty$ uniformly in $t$ as $n\to\infty$.
\end{defn}

\begin{defn}[Dynamic rays, escape on rays]
	\label{dfn:dynamic_ray}
	A \emph{dynamic ray} of a transcendental entire function $f$ is a maximal injective curve $\gamma:(0,\infty)\to I(f)$ such that $\gamma|_{[t,\infty)}$ is a ray tail for every $t>0$.
	
	If $z\in I(f)$ belongs to a dynamic ray, we say that $z$ \emph{escapes on rays}. 
\end{defn}

We showed in \cite{IDTT2} that for functions in $\nd$ its dynamic rays (at least their parts near $\infty$) are characterized by an exponentially bounded external address and parametrized by potential.

Fix an integer $d>0$, and denote by $F:\mathbb{R^+}\to\mathbb{R^+}$ the function $$F(t):=\exp(dt)-1.$$
The \emph{external address} $\underline{s}=(s_0 s_1 s_2 ... )$ is an infinite sequence of integers $s_0,s_1,s_2,...$ On the set of all external addresses we can consider the standard shift-operator $\sigma:(s_0 s_1 s_2 ... )\mapsto (s_1 s_2 s_3 ... )$. Two external addresses $\underline{s}_1$ and $\underline{s}_2$ are called \emph{overlapping} if there are integers $k,l\geq0$ so that $\sigma^k\underline{s}_1=\sigma^l\underline{s}_2$.

Further, an external address $\underline{s}=(s_0 s_1 s_2 ... )$ is called \emph{exponentially bounded} if there exists $t>0$ such that $s_n/{F^n(t)}\to 0$ as $n\to\infty$. The infimum of such $t$ is denoted by $t_{\underline{s}}$.	

As can be seen in the following theorem, exponentially bounded external addresses correspond to dynamic rays (at least to their part near $\infty$) of functions in $\nd$. Moreover, from the formula~\ref{eqn:as_formula}, dynamic rays look near $\infty$ almost like straight horizontal lines with the imaginary part that can be recovered from their external address.

\begin{thm}[Escape on rays in $\nd$ \cite{IDTT2}]
	\label{thm:as_formula}
	Let $f\in\mathcal{N}_d$. Then for every exponentially bounded external address there exists a dynamic ray realizing it.
	
	If $\mathcal{R}_{\underline{s}}$ is dynamic ray having an exponentially bounded external address $\underline{s}=(s_0 s_1 s_2 ... )$, and no strict forward iterate of $\mathcal{R}_{\underline{s}}$ contains a singular value of $f$, then $\mathcal{R}_{\underline{s}}$ is the unique dynamic ray having external address $\underline{s}$, and it can be parametrized by $t\in(t_{\underline{s}},\infty)$ so that 
	\begin{equation}
		\label{eqn:as_formula}
		\mathcal{R}_{\underline{s}}(t)=t+\frac{2\pi i s_0}{d} + O(e^{-t/2}),
	\end{equation}
	and $f^n\circ \mathcal{R}_{\underline{s}}=\mathcal{R}_{\sigma^n \underline{s}}\circ F^n$. Asymptotic bounds $O(.)$ for $\mathcal{R}_{\sigma^n\underline{s}}(t)$ are uniform in $n$ on every ray tail contained in $\mathcal{R}_{\underline{s}}$.
	
	If none of the singular values of $f$ escapes, then $I(f)$ is the disjoint union of dynamic rays and their escaping endpoints.
\end{thm}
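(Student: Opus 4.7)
The plan is to prove the three claims via a pullback-approximation scheme that exploits the explicit behavior of $f\in\nd$ near $+\infty$. Writing $p(w)=w^d+a_{d-1}w^{d-1}+\cdots$ gives $f(z)=e^{dz}\bigl(1+O(e^{-z})\bigr)$ as $\Re z\to+\infty$, so on a suitable right half-plane $H_R=\{\Re w>R\}$ the equation $f(z)=w$ has, modulo vertical $2\pi i/d$-translation, a unique solution, giving countably many inverse branches
\[ L_k(w)=\frac{\log w}{d}+\frac{2\pi i k}{d}+O(w^{-1}),\qquad k\in\mathbb{Z}, \]
each mapping $H_R$ strictly into itself and contracting strongly in the hyperbolic metric of a larger half-plane. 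I would first prove existence and the asymptotic formula by constructing, for each exponentially bounded $\underline{s}=(s_0 s_1\ldots)$, the approximants
\[ H_n(t):=L_{s_0}\circ L_{s_1}\circ\cdots\circ L_{s_{n-1}}\!\bigl(F^n(t)+\tfrac{2\pi i s_n}{d}\bigr),\qquad t>t_{\underline{s}}, \]
noting that exponential boundedness of $\underline{s}$ is exactly what keeps the inner argument inside $H_R$ for every $n$, and that the outer contractions telescope to an estimate of the form $|H_{n+1}(t)-H_n(t)|\le C\lambda^n e^{-t/2}$ for some $\lambda\in(0,1)$. Hence $H_n$ converges uniformly on compacta to a curve $\mathcal{R}_{\underline{s}}$; summing the telescope yields $\mathcal{R}_{\underline{s}}(t)=t+\tfrac{2\pi i s_0}{d}+O(e^{-t/2})$ with a constant uniform along forward shifts $\sigma^n\underline{s}$. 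The functional equation $f\circ\mathcal{R}_{\underline{s}}=\mathcal{R}_{\sigma\underline{s}}\circ F$ is immediate from the definition of $H_n$, and injectivity of $f^n|_{\mathcal{R}_{\underline{s}}}$ together with uniform escape on ray tails follow from the asymptotic formula at large $t$ and propagate inward via the functional equation.

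Next I would handle uniqueness and the structure of $I(f)$ together. If $\tilde{\mathcal{R}}$ is another dynamic ray realizing $\underline{s}$, the asymptotic form applied to $f^n\circ\tilde{\mathcal{R}}$ places $\tilde{\mathcal{R}}$ in the domain of the same branch chain $L_{s_0}\circ\cdots\circ L_{s_{n-1}}$; the assumption that no strict forward iterate of $\mathcal{R}_{\underline{s}}$ meets a singular value lets these branches be continued analytically along $\tilde{\mathcal{R}}$, and the contraction forces $\tilde{\mathcal{R}}=\mathcal{R}_{\underline{s}}$ first on a tail and then everywhere. For the last claim, $f\in\nd$ has finite order $d$ and bounded type (the singular set consists of $p(0)$ and the at most $d-1$ critical values of $p$), so \cite{RRRS} ensures that every escaping point lies on a dynamic ray or is an escaping endpoint of one; when no singular value escapes, all inverse branches extend globally along every ray, which forces pairwise disjointness of rays with different addresses and delivers the disjoint-union description of $I(f)$.

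The hard part will be the quantitative convergence of $H_n$ for unbounded external addresses. Exponential boundedness is the weakest condition keeping the iterates inside $H_R$, and the contraction factor of $L_{s_n}$ deteriorates as $|s_n|$ grows: one must show that the iterated expansion of $F$ uniformly dominates this loss and carry the constant in $O(e^{-t/2})$ through every stage, rather than merely prove abstract convergence. Once this quantitative control is in place, the functional equation, uniqueness, and the appeal to \cite{RRRS} for the global structure are essentially formal consequences of the setup.
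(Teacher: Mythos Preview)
The paper does not prove this theorem at all: Theorem~\ref{thm:as_formula} is quoted verbatim from \cite{IDTT2} as a prerequisite (note the citation in the theorem heading and its placement in Section~2, ``Prerequisites''), and no argument is supplied here. So there is nothing in the present paper to compare your proposal against.

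That said, your sketch is the standard and correct route for results of this type, in the spirit of \cite{SZ-Escaping} and \cite{RRRS}: build inverse branches $L_k$ on a right half-plane from the asymptotics $f(z)=e^{dz}(1+O(e^{-z}))$, define approximants $H_n$ by composing the branches dictated by $\underline{s}$, and use a telescoping contraction estimate to obtain convergence together with the $O(e^{-t/2})$ error; uniqueness then follows from the same contraction once the singular-value hypothesis permits analytic continuation of the branch chain, and the global structure of $I(f)$ is imported from \cite{RRRS}. One point to be careful about when you flesh this out: the parametrization is asserted on the full interval $(t_{\underline{s}},\infty)$, not just on a tail $(T,\infty)$ with $T$ large, so after constructing the ray near $+\infty$ you must extend it inward by pulling back along the functional equation $f\circ\mathcal{R}_{\underline{s}}=\mathcal{R}_{\sigma\underline{s}}\circ F$, and it is precisely here that the hypothesis ``no strict forward iterate of $\mathcal{R}_{\underline{s}}$ contains a singular value'' does its work. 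Your proposal gestures at this but folds it into the uniqueness paragraph; it deserves its own step.
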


The parametrization~\ref{eqn:as_formula} allows us to define the potential of a point on a dynamic ray: if $z\in\mathcal{R}_{\underline{s}}$ corresponds to the parameter $t$ of formula~\ref{eqn:as_formula}, we say that $t$ is the \emph{potential} of $z$. From Theorem~\ref{thm:as_formula} follows immediately that the potential is defined in a canonical way, and if the potential of $z$ is equal to $t$, then the potential of $f(z)$ is equal to $F(t)$. Note that different points on the same ray have different potentials.

\subsection{Setup of Thurston iteration and contraction}

\label{subsec:setup_and_contraction}

In this subsection we overview a few important definitions from \cite{IDTT2} and sketch the setup of the Thurston iteration.

One starts with the construction of a quasiregular function $f$ modeling escaping behavior of singular values. Let $f_0\in\mathcal{N}_d$, and $v_1,...,v_m$ be the finite singular values of $f_0$. Further, let $\mathcal{O}_1=\{a_{1j}\}_{j=0}^\infty,...,\mathcal{O}_m=\{a_{mj}\}_{j=0}^\infty$ be some orbits of $f_0$ escaping on disjoint rays $\mathcal{R}_{ij}$, and let $R_{ij}$ be the part of the ray $\mathcal{R}_{ij}$ from $a_{ij}$ to $\infty$ (containing also $a_{ij}$).

Now, we introduce the \emph{capture}. Choose some bounded Jordan domain $U\subset\mathbb{C}\setminus\bigcup_{\substack{i=\overline{1,m}\\j=\overline{1,\infty}}} R_{ij}$ containing all singular values of $f_0$ and the first point $a_{i0}$ on each escaping orbit $\mathcal{O}_i$. Define an isotopy $c_u:\mathbb{C}\to\mathbb{C}$ through \qc\ maps, such that $c_0=\id$, $c_u=\id$ on $\mathbb{C}\setminus U$, and for each $i=\overline{1,m}$ we have $c_1(v_i)=a_{i0}$. Denote $c=c_1$. Thus, $c$ is a \qc\ homeomorphism mapping singular values to the first points on the orbits $\mathcal{O}_i$.

Define a function $f:=c\circ f_0$. It is a quasiregular function whose singular orbits coincide with $\{\mathcal{O}_i\}_{i=1}^m$. Its post-singular set of $f$ is defined in a standard way: $$P_f:=\cup_{i=1}^m\mathcal{O}_i.$$ We also call $P_f$ the set of \emph{marked points}.

The Thurston $\sigma$-map acts on the \tei\ space of $\mathbb{C}\setminus P_f$.

\begin{defn}[\tei\ space of $\mathbb{C}\setminus P_f$]
	\label{defn:tei_space}
	The \emph{\tei\ space} $\mathcal{T}_f$ of the Riemann surface $\mathbb{C}\setminus P_f$ is the set of quasiconformal homeomorphisms of $\mathbb{C}\setminus P_f$ modulo post-composition with an affine map and isotopy relative $P_f$.
\end{defn}

The map
$$\sigma:[\varphi]\in\mathcal{T}_f\mapsto[\tilde{\varphi}]\in\mathcal{T}_f$$
is defined in the standard way via the Thurston diagram.

\begin{center}
	\begin{tikzcd}
		\mathbb{C},P_f \arrow[r, "{\tilde{\varphi}}"] \arrow[d, "f=c\circ f_0"]	& \mathbb{C},\tilde{\varphi}(P_f) \arrow[d, "g"] \\
		\mathbb{C},P_f \arrow[r, "{\varphi}"] & \mathbb{C},\varphi(P_f)
	\end{tikzcd}
\end{center}
\vspace{0.5cm}

More precisely, for every \qc\ map $\varphi:\mathbb{C}\to\mathbb{C}$ there exists another \qc\ map $\tilde{\varphi}:\mathbb{C}\to\mathbb{C}$ such that $g=\varphi\circ f\circ\tilde{\varphi}^{-1}$ is entire function. We define $\sigma[\varphi]:=[\tilde{\varphi}]$. For more details and the proof that this setup is well defined see \cite{IDTT1}. 

As in \cite{IDTT2}, we need to consider the subset of asymptotically conformal points of $\mathcal{T}_f$ and to use the fact that the $\sigma$-map is strictly contracting on this subset.

\begin{defn}[Asymptotically conformal points \cite{Gardiner}]
	\label{defn:as_conformal}
	A point $[\varphi]\in\mathcal{T}_f$ is called \emph{asymptotically conformal} if for every $\epsilon>0$ there is a compact set $C\subset\mathbb{C}\setminus P_f$ and a representative $\psi\in[\varphi]$ such that $\abs{\mu_\psi}<\epsilon$ a.e.\ on $(\mathbb{C}\setminus P_f)\setminus C$.
\end{defn}

\subsection{$\Id$-type maps}
\label{subsec:id_type}

One of the key notions in our constructions are $\id$-type maps. 

Let $f=c\circ f_0$ be the quasiregular function defined in Section~\ref{subsec:setup_and_contraction} where $f_0=p\circ\exp$. The union of post-singular ray tails $S_0=\cup_{i,j} R_{ij}$ is called the \emph{standard spider} of $f$.

\begin{defn}[$\Id$-type maps \cite{IDTT2}]
	\label{defn:id_type}
	A quasiconformal map $\varphi:\mathbb{C}\to \mathbb{C}$ is \idt\ if there is an isotopy $\varphi_u:\mathbb{C}\to\mathbb{C},\ u\in [0,1]$ such that ${\varphi_1=\varphi},\ \varphi_0=\id$ and $\abs{\varphi_u(z)-z}\to 0$ uniformly in $u$ as $S_0\ni z\to \infty$.
\end{defn}

We also say that $[\varphi]\in\mathcal{T}_f$ is \idt\ if $[\varphi]$ contains an $\id$-type map. Further, $\psi_u, u\in[0,1]$ is an \emph{isotopy \idt\ maps} if $\psi_u$ is an isotopy through maps \idt\ such that $\abs{\psi_u(z)-z}\to 0$ uniformly in $u$ as $S_0\ni z\to \infty$.

The following theorem says that the $\sigma$-map is invariant on the subset of $\id$-type points in $\mathcal{T}_f$.

\begin{thm}[Invariance of $\id$-type points \cite{IDTT2}]
	\label{thm:pullback_of_id_type}
	If $[\varphi]$ is \idt, then $\sigma[\varphi]$ is \idt\ as well.
	
	More precisely, if $\varphi$ is \idt, then there is a unique $\id$-type map $\hat{\varphi}$ such that $\varphi\circ f\circ\hat{\varphi}^{-1}$ is entire.
	
	Moreover, if $\varphi_u$ is an isotopy of $\id$-type maps, then the functions $\varphi_u\circ f\circ\hat{\varphi}_u^{-1}$ have the form $p_u\circ\exp$ where $p_u$ is a monic polynomial with coefficients depending continuously on $u$.
\end{thm}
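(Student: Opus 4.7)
The plan is to construct $\hat\varphi$ in three stages: first apply the Measurable Riemann Mapping Theorem to pull back $\varphi$ through $f$, then upgrade this to a continuous isotopy from $\id$ via MRMT applied to a path of Beltrami coefficients, and finally verify the \idt\ property by a direct asymptotic estimate on the spider. Before doing so I would make a reduction that simplifies things: the capture $c$ is itself \idt, since the witnessing isotopy $c_u$ is the identity outside the bounded set $U$ and hence decays trivially on $S_0$ at infinity. So concatenating $c_u$ with $\varphi_u\circ c$ yields an isotopy $\phi_u$ of \idt\ maps from $\id$ to $\varphi\circ c$, and the defining equation $\varphi\circ f\circ\hat\varphi^{-1}=g$ becomes $\phi_1\circ f_0\circ\hat\varphi^{-1}=g$. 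From here I am pulling back an isotopy of \idt\ maps through the holomorphic transcendental function $f_0 = p_0\circ\exp$.

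For each $u\in[0,1]$, MRMT applied to $\mu_u := \mu_{\phi_u\circ f_0}$ produces a qc map $\hat\varphi_u$, unique up to affine post-composition. Since $\mu_0 = 0$, the normalization $\hat\varphi_0 = \id$ is consistent, and the continuous dependence of MRMT solutions on Beltrami coefficients propagates the normalization to give a continuous family $\hat\varphi_u$ connecting $\id$ to $\hat\varphi := \hat\varphi_1$. By construction $g_u := \phi_u\circ f_0\circ\hat\varphi_u^{-1}$ is entire. Since $g_u$ is a qc-deformation of $f_0\in\mathcal{N}_d$, it lies in the parameter space of $f_0$, so $g_u = p_u\circ\exp$ for some polynomial $p_u$ of degree $d$, and continuity of $p_u$ in $u$ follows from continuous dependence of $g_u$ on $\mu_u$.

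The main difficulty is verifying the decay $|\hat\varphi_u(z)-z|\to 0$ uniformly in $u$ as $S_0\ni z\to\infty$, which certifies that $\hat\varphi$ is \idt. For $z$ far along a ray $R_{ij}$, $f_0(z)$ sits far along $R_{i,j+1}$, and the \idt\ property of $\phi_u$ gives $|\phi_u(f_0(z)) - f_0(z)|\to 0$ uniformly in $u$. The equation $g_u(\hat\varphi_u(z)) = \phi_u(f_0(z))$ then recovers $\hat\varphi_u(z)$ as the image, under the appropriate branch of $g_u^{-1}$, of a point very close to $f_0(z)$. Because $g_u = p_u\circ\exp$, the derivative $g_u'(z) = p_u'(e^z)e^z$ is exponentially large at large $\Re z$ uniformly in $u$, so the relevant local branch is strongly contracting and a small perturbation of $f_0(z)$ pulls back to an even smaller perturbation of $z$. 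The uniform asymptotic description of $R_{i,j+1}$ from Theorem~\ref{thm:as_formula} is what picks out the correct branch for all sufficiently large $z$ and all $u$; the same boundary computation, taken in the limit at $+\infty$, forces the leading coefficient of $p_u$ to coincide with that of $p_0$, hence $p_u$ is monic.

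Uniqueness of the \idt\ representative is then straightforward: two candidates $\hat\varphi$, $\hat\varphi'$ share the same Beltrami coefficient, so differ by an affine map $A$ satisfying $A(w) - w\to 0$ along the tails of the rays $R_{ij}$, forcing $A = \id$. The hardest step I expect is the uniform contraction estimate in the third paragraph, where one has to juggle the dependence on $u$ of the polynomials $p_u$ (which is precisely the continuous dependence being proved) together with the \idt\ decay of $\phi_u$ and the uniform asymptotic shape of the post-singular rays supplied by Theorem~\ref{thm:as_formula}.
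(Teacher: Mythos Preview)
This theorem is not proved in the present paper: it appears in Section~2 (Prerequisites) with an explicit citation to \cite{IDTT2}, and the paper simply imports it as a black box (the remark immediately following it just fixes the hat-notation). So there is no in-paper proof to compare your proposal against.

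That said, your outline is a reasonable reconstruction of how such a statement is typically proved and is presumably close in spirit to what \cite{IDTT2} does: pull back the Beltrami coefficient of $\varphi\circ f$ via the Measurable Riemann Mapping Theorem with continuous parameter dependence, then verify the $\id$-type decay on $S_0$ by exploiting the exponential expansion of $g_u$ at large real part. Two places where your sketch is thin deserve mention. First, the assertion that $g_u\in\mathcal{N}_d$ ``because it lies in the parameter space of $f_0$'' is not automatic: the parameter space as defined consists of all $\varphi\circ f_0\circ\psi$ with $\varphi,\psi$ quasiconformal, and a priori contains functions of the form $q(e^{bz})$ with $b\neq 1$ or $q$ non-monic. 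Pinning down $b=1$ and monicity is exactly what the $\id$-type normalization at $+\infty$ buys you, so this step and the decay estimate are intertwined rather than sequential as you present them. Second, you invoke ``the normalization $\hat\varphi_0=\id$'' and ``continuous dependence propagates the normalization'' without specifying which normalization of the MRMT solution you mean; the standard three-point normalization gives continuity, but the $\id$-type condition is a different (asymptotic) normalization, and reconciling the two is part of the actual content of the theorem.
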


\begin{remark}
	We keep using the hat-notation from Theorem~\ref{thm:pullback_of_id_type}: $\hat{\varphi}$ denotes the unique $\id$-type map so that $\varphi\circ f\circ\hat{\varphi}^{-1}$ is entire.	
\end{remark}

\subsection{Spiders}

Another important tool in the construction of the invariant compact subset of $\mathcal{T}_f$ are infinite-legged spiders. However, in this article we only need to prove one rather simple fact about them (Lemma~\ref{prp:constant_homotopy_near_infinity}); all other applications of spiders are already involved in \cite{IDTT2}.

\begin{defn}[Spider]
	\label{defn:spider}
	An image $S_{\varphi}$ of the standard spider $S_0$ under an $\id$-type map $\varphi$ is called a \emph{spider}.	
\end{defn}

Naturally, the image of a ray tail $R_{ij}$ under a spider map is called a \emph{leg}.

Let $V=\{w_n\}\subset\mathbb{C}$ be a finite set. Further, let $\gamma:[0,\infty]\to\hat{\mathbb{C}}$ be a curve such that $\gamma(0)\in V$, $\gamma(\infty)=\infty$, $\gamma|_{\mathbb{R}^+}\subset\mathbb{C}\setminus V$ and $\Re\gamma(t)\to+\infty$ as $t\to\infty$. On the set of all such pairs $(V,\gamma)$ one considers a map
$$W: (V,\gamma)\mapsto W(V,\gamma)\in F(V),$$
where $F(V)$ is the free group on $V$. This map uniquely encodes the homotopy type of $\gamma$ (with fixed endpoints) in $\mathbb{H}_r\setminus V$ where $\mathbb{H}_r=\{z\in\mathbb{C}:\Re z>r\}$ contains $V$ and $\gamma$ (a particular value of $r$ is irrelevant). Roughly speaking, we homotope $\gamma$ into a concatenation of a horizontal straight ray from $\gamma(0)$ to $+\infty$, and a loop with the base point at $\infty$, and by $W(V,\gamma)$ denote the representation of the loop via ``straight horizontal'' generators of the fundamental group of $(\mathbb{H}_r\setminus V)\cup\infty$. For the details of the construction we refer the reader to \cite[Subsection~6.1]{IDTT1}. We use this type of information for every leg of a spider to obtain a description of the $\id$-type points.

For every pair of $i\in \{1,2,...,m\},j\geq 0$ denote
$$\mathcal{O}_{ij}:=\{a_{kl}\in P_f: l<j\text{ or } l=j,k\leq i\}.$$

\begin{defn}[Leg homotopy word]
	Let $S_\varphi$ be a spider. Then the \emph{leg homotopy word} of a leg $\varphi(R_{ij})$ is 
	$$W_{ij}^\varphi:=W(\varphi(\mathcal{O}_{ij}),\varphi(R_{ij})).$$
\end{defn}

Since $W_{ij}^\varphi$ is an element of a free group, by $\abs{W_{ij}^\varphi}$ we naturally denote the length of the word representing this element.

\section{Preliminary constructions}
\label{sec:prel_constructions}

We consider the \qc\ function $f=c\circ f_0$ with $m$ singular orbits $\mathcal{O}_i=\{a_{ij}\}$ constructed in Subsection~\ref{subsec:setup_and_contraction} with its associated $\sigma$-map. We denote by $t_{ij}$ the potential of $a_{ij}$ and by $s_{ij}$ the first number in the external address of $a_{ij}$ (recall that the imaginary part of $R_{ij}$ tends to $2\pi s_{ij}/d$). Denote by $\SV(f)$ the set of finite singular values of $f$, i.e.\ the image of (finite) singular values of $f_0$ under the capture $c$. In this subsection we prove a few preliminary lemmas needed for the construction of the compact invariant subset $\mathcal{C}_f$. Note that we provide only the statements that are needed to upgrade the construction in \cite{IDTT2} to the degenerate case and mainly avoid duplicating results from \cite{IDTT2}. 

Let $\mathcal{P}=\{t_i\}_{i=1}^\infty$ be the set of all potentials of points in $P_f$ ordered so that $t_i<t_{i+1}$. Further, we define a set $$\mathcal{P}':=\{\rho_i:\rho_i=\frac{t_i+t_{i+1}}{2}\}.$$

The next lemma separates $P_f$ and $S_0$ into two parts: one near $\infty$ with good estimates and ``straight'' spider legs, and one near the origin where the points are located somewhat more chaotically. This is a statement \emph{solely} about $f_0$ and its standard spider.

\begin{lmm}[Good estimates for $S_0$ near $\infty$ \cite{IDTT2}]
	\label{lmm:good_behavior_of_f_0}
	There exists $t'>0$ such that:
	\begin{enumerate}
		\item if $t_i>t_j>t'$, then $t_i-t_j>2$,
		\item if $t_{kl}>t_{ij}>t'$, then $\abs{a_{kl}}>\abs{a_{ij}}+2$,
		\item if $\rho\in \mathcal{P}'$ is bigger than $t'$, then all $a_{ij}$ with potential less that $\rho$ are contained in $\mathbb{D}_{\rho-1}(0)$, while all $a_{ij}\in P_f$ with potential bigger than $\rho$ are contained in the complement of $\mathbb{D}_{\rho+1}(0)$.
	\end{enumerate}	
\end{lmm}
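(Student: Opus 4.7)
My plan is to combine the super-exponential growth of $F(t)=e^{dt}-1$ with the asymptotic ray formula of Theorem~\ref{thm:as_formula}. Item (1) is proved first; items (2) and (3) then follow from (1) by comparing $|a_{ij}|$ with the potential $t_{ij}$.

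For item (1), same-orbit separation is immediate, since the gap $t_{i,j+1}-t_{i,j}=e^{dt_{i,j}}-1-t_{i,j}$ tends to infinity. For cross-orbit separation I would use a pullback contraction argument. The inverse $F^{-1}(s)=\frac{1}{d}\log(s+1)$ is strictly contracting on $[T,\infty)$ with derivative bounded by $1/(d(T+1))$. Thus if two potentials $t_{i_1,j_1},t_{i_2,j_2}$ on different orbits $i_1\neq i_2$ both exceed $T$ and differ by $\delta$, then $k$ iterated pullbacks yield
\[
|t_{i_1,j_1-k}-t_{i_2,j_2-k}|\le \frac{\delta}{(d(T+1))^{k}}.
\]
Taking WLOG $j_1\le j_2$ and $k=j_1$ compares the fixed starting potential $t_{i_1,0}$ with $t_{i_2,j_2-j_1}$. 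Since $\{t_{i_2,k}\}_{k\ge 0}$ is discrete and diverges to $\infty$, the minimum positive distance $\varepsilon_{i_1,i_2}:=\min\{|t_{i_1,0}-t_{i_2,k}|:k\ge 0,\ t_{i_1,0}\neq t_{i_2,k}\}$ is strictly positive. Either $t_{i_1,0}=t_{i_2,j_2-j_1}$, in which case forward iteration gives $t_{i_1,j_1}=t_{i_2,j_2}$ (the same element of $\mathcal{P}$, not violating (1)), or $\delta\ge\varepsilon_{i_1,i_2}(d(T+1))^{j_1}$. Since $T\to\infty$ forces $j_1\to\infty$, the right-hand side grows without bound and eventually exceeds $2$. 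Taking the maximum over the finitely many pairs of distinct orbits yields the required $t'$.

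For items (2) and (3), the asymptotic ray formula gives $\Re a_{ij}=t_{ij}+O(e^{-t_{ij}/2})$ and $\Im a_{ij}=2\pi s_{ij}/d+O(e^{-t_{ij}/2})$. Exponential boundedness of $\underline{s}_i$ combined with $t_{i,0}>t_{\underline{s}_i}$ implies $s_{ij}^2/t_{ij}\to 0$, and hence $|a_{ij}|-t_{ij}\to 0$ as $t_{ij}\to\infty$. Enlarging $t'$ so that $\bigl||a_{ij}|-t_{ij}\bigr|<1/4$ whenever $t_{ij}>t'$, item (2) follows directly from (1) (after possibly strengthening the gap constant $2$ in (1) to a slightly larger value, for which the pullback argument works verbatim). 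Item (3) is then immediate: by (1), the midpoint $\rho$ of two consecutive potentials lies at distance $>1$ from each, and the estimate $|a_{ij}|\approx t_{ij}$ translates this into the required containment in $\mathbb{D}_{\rho-1}(0)$ and in its complement.

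The main obstacle is the cross-orbit case of (1): a priori, potentials on different orbits could accumulate on each other. Ruling this out relies on the finiteness of the orbit collection combined with the discreteness of each individual orbit's potential set, bridged by the contraction of $F^{-1}$ on $[T,\infty)$.
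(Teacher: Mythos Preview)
The present paper does not prove this lemma; it is quoted from \cite{IDTT2} and carries only the citation as justification, so there is no in-paper argument to compare against.

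Your approach is the natural one and is essentially correct: super-exponential growth of $F$ for item (1), then the asymptotic ray formula~\ref{eqn:as_formula} to convert potential gaps into modulus gaps for (2) and (3). There is, however, one technical slip in the cross-orbit part of (1). You assert that $k$ successive applications of $F^{-1}$ contract distances by the factor $(d(T+1))^{k}$, but the bound $(F^{-1})'(s)\le 1/(d(T+1))$ is only valid on $[T,\infty)$, and after the very first pullback the values $t_{i_1,j_1-1}$ and $t_{i_2,j_2-1}$ typically lie far below $T$. The repair is immediate: since $(F^{-1})'(s)=1/(d(s+1))\le 1$ on all of $[0,\infty)$, the remaining $j_1-1$ pullbacks are at worst non-expanding, so already the first step alone yields
\[
|t_{i_1,0}-t_{i_2,j_2-j_1}|\;\le\;\frac{\delta}{d(T+1)},
\]
whence $\delta\ge \varepsilon_{i_1,i_2}\,d(T+1)\to\infty$ as $T\to\infty$; this suffices and you do not even need $j_1\to\infty$. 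Alternatively, for $d\ge 2$ the uniform factor $1/d$ at every step gives $\delta\ge\varepsilon_{i_1,i_2}\,d^{\,j_1}$; the case $d=1$ is vacuous since then $m=1$ and no cross-orbit pair exists. With this correction your argument is complete.
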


For all $\rho\in\mathcal{P}'$ we use the following notation. Denote $D_\rho:=\mathbb{D}_\rho (0)$, and for $i\in\{1,...,m\}$ let $N_i=N_i(\rho)$ be the maximal $j$-index of the points $a_{ij}$ contained in $D_\rho$. For $\rho>t'$ the disk $D_\rho$ contains first $N_i+1$ points $\{a_{i0},...,a_{iN_i}\}$ of $\mathcal{O}_i$, whereas the other points of $\mathcal{O}_i$ are in $\mathbb{C}\setminus{D}_{\rho}$.

\subsection{Clusters}

First of all, we define the central object of this article.
\begin{defn}[Cluster \cite{IDTT2}]
	\label{defn:cluster}
	We say that $a_{ij}$ and $a_{kl}$ are in the same \emph{cluster $Cl(t,s)$} if they have the same potential $t=t_{ij}=t_{kl}$ and the same first entry $s=s_{ij}=s_{kl}$ in the external address.
	
	A cluster is called non-trivial if it contains more than on point of $P_f$.
\end{defn}

Note that the set $P_f$ is a disjoint union of clusters and each cluster contains at most $m$ points. As can be seen from the asymptotic formula~\ref{eqn:as_formula}, for points $a_{ij}$ and $a_{kl}$ with big potentials being in the same cluster implies that distance between them is very small, whereas the distance between any pair of clusters is bounded from below.

The main reason, why we have to take clusters into account, is that if we have infinitely many non-trivial clusters, we cannot use the same model for $\mathcal{C}_f$ as in the non-degenerate case \cite{IDTT2}: we cannot separate points in the same clusters by disjoint discs that stable under the Thurston iteration (see discussion after \cite[Remark~5.10]{IDTT2}). On the other hand, we can control the behavior of marked points in clusters in a sense that under the Thurston iteration this position is basically getting scaled by a real constant (see Proposition~\ref{prp:good_big_disk_around_origin_cluster_estimates}).

Now, we are going to define two types of sets that will help us to describe the behavior of points inside of a cluster under the Thurston iteration. Their definitions might seem redundantly technical for the moment but this will pay out by simpler computations later.

\begin{defn}[$A_{x,n}$] 
	\label{defn:Axn}
	For $x>0$ and integer $n\geq 0$ let $A_{x,n}$ be the set of all complex numbers $\alpha$ such that
	\begin{enumerate}
		\item $\abs{\log\abs{\alpha}}<e^{-x/3}+...+e^{-F^n(x)/3}$,
		\item $\abs{\arg \alpha}<e^{-x/3}+...+e^{-F^n(x)/3}$.
	\end{enumerate}
\end{defn}

Visually $A_{x,n}$ is an annular sector containing $1$, see Figure~\ref{pic:Axn}. If $x$ increases, due to the fast convergence of the series $\sum_n e^{-F^n(x)/3}$, the sets $A_{x,n}$ shrink to arbitrarily small neighborhoods of $1$ uniformly in $n$.

The next lemma is the direct corollary of Definition~\ref{defn:Axn}. It presents a property that allows us to reduce the amount of computations later on.

\begin{lmm}[Pseudo-Multiplicativity of $A_{x,n}$]
	If $\alpha_i\in A_{F^{i-1}(x),0}$ for $i\in\{1,...,k\}$, then $\alpha_1\cdot...\cdot\alpha_k\in A_{x,k-1}$.
\end{lmm}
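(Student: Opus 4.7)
The statement is a direct computation from Definition~\ref{defn:Axn}, and my plan is essentially just to apply the triangle inequality twice --- once to the moduli and once to the arguments of the $\alpha_i$.

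First I would write $\alpha=\alpha_1\cdots\alpha_k$ and split the verification into the two conditions in Definition~\ref{defn:Axn}. For the modulus condition, I would use $\log\abs{\alpha}=\sum_{i=1}^k\log\abs{\alpha_i}$ and apply the triangle inequality. Since $\alpha_i\in A_{F^{i-1}(x),0}$ gives $\abs{\log\abs{\alpha_i}}<e^{-F^{i-1}(x)/3}$, summing yields
\[
\abs{\log\abs{\alpha}}<\sum_{i=1}^k e^{-F^{i-1}(x)/3}=e^{-x/3}+e^{-F(x)/3}+\ldots+e^{-F^{k-1}(x)/3},
\]
which is precisely the first bound in the definition of $A_{x,k-1}$.

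For the argument condition, I would do the same but I need to be slightly careful about the choice of branch. Since each $\abs{\arg\alpha_i}<e^{-F^{i-1}(x)/3}$ and the tail $\sum_{j\geq 0}e^{-F^j(x)/3}$ is a tiny number (with $F$ growing doubly-exponentially, the partial sums stay well below $\pi$ for any $x$ in the regime where the statement is meaningfully applied), we can sum the principal values of $\arg\alpha_i$ without any $2\pi$-correction and identify the result with $\arg\alpha$. The triangle inequality then gives exactly the second bound in $A_{x,k-1}$.

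There is no real obstacle: the definition of $A_{x,n}$ was clearly engineered so that the relevant bounds telescope into the required form under multiplication, and this lemma records that telescoping. The only point worth flagging is that one should verify (or assume, consistent with the way the sets $A_{x,n}$ are used later in the paper) that $x$ is large enough for the argument bound to remain below $\pi$, so that the additive composition of arguments is literally the argument of the product, not its value modulo $2\pi$.
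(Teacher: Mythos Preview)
Your proposal is correct and is exactly the intended argument: the paper itself gives no proof, stating only that the lemma is a ``direct corollary'' of Definition~\ref{defn:Axn}, and your two applications of the triangle inequality (to $\log|\alpha|$ and to $\arg\alpha$) are precisely that corollary spelled out. Your caveat about the branch of the argument is appropriate and more careful than the paper; in the applications the potential $t$ is always taken large enough that $\sum_{n\geq 0}e^{-F^n(t)/3}<\pi$, so no $2\pi$-ambiguity arises.
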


\begin{figure}[h]
	\includegraphics[width=\textwidth]{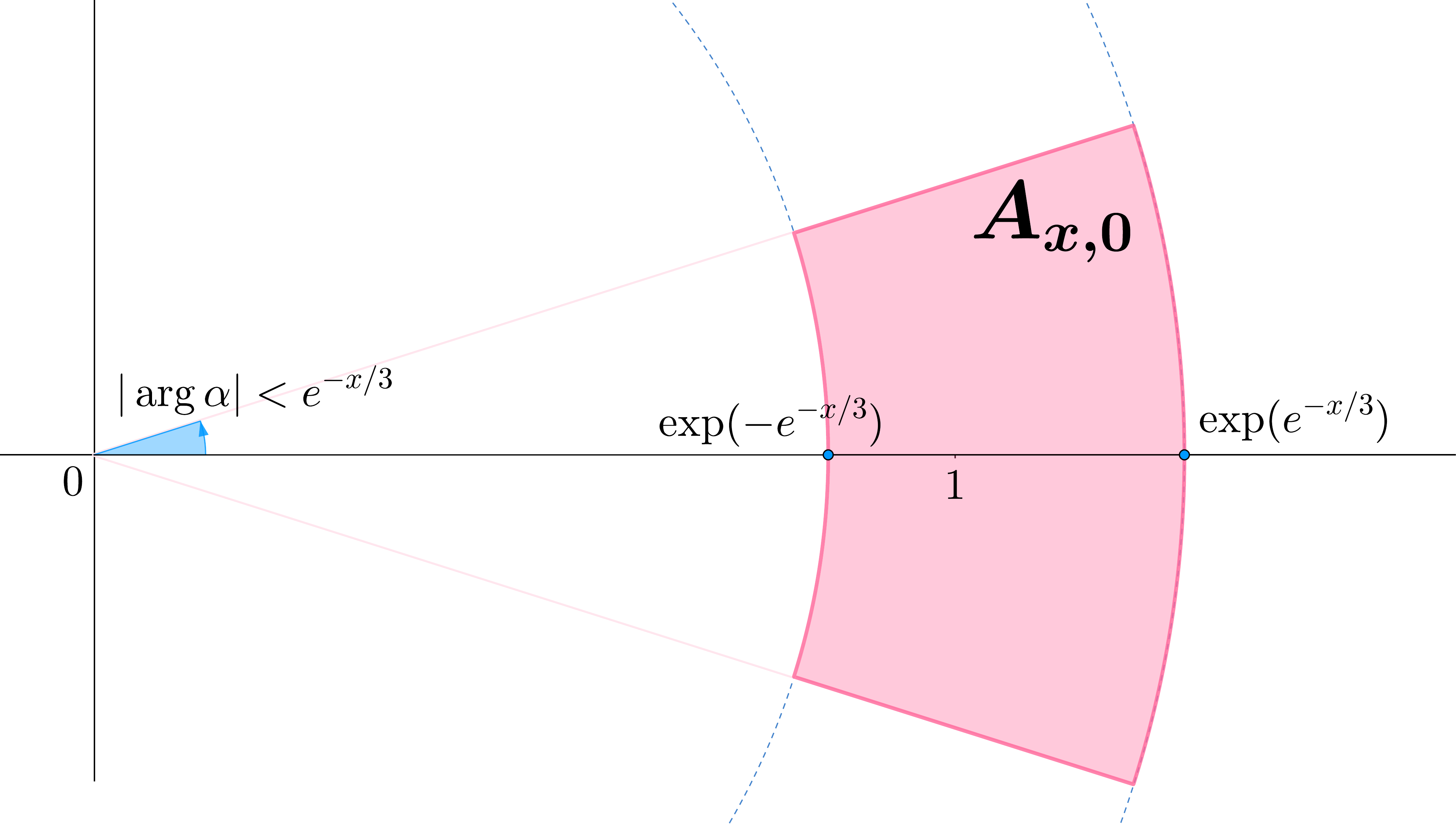}
	\caption{Schematic example of $A_{x,0}$.}
	\label{pic:Axn}
\end{figure}

\begin{figure}[h]
	\includegraphics[width=0.8\textwidth]{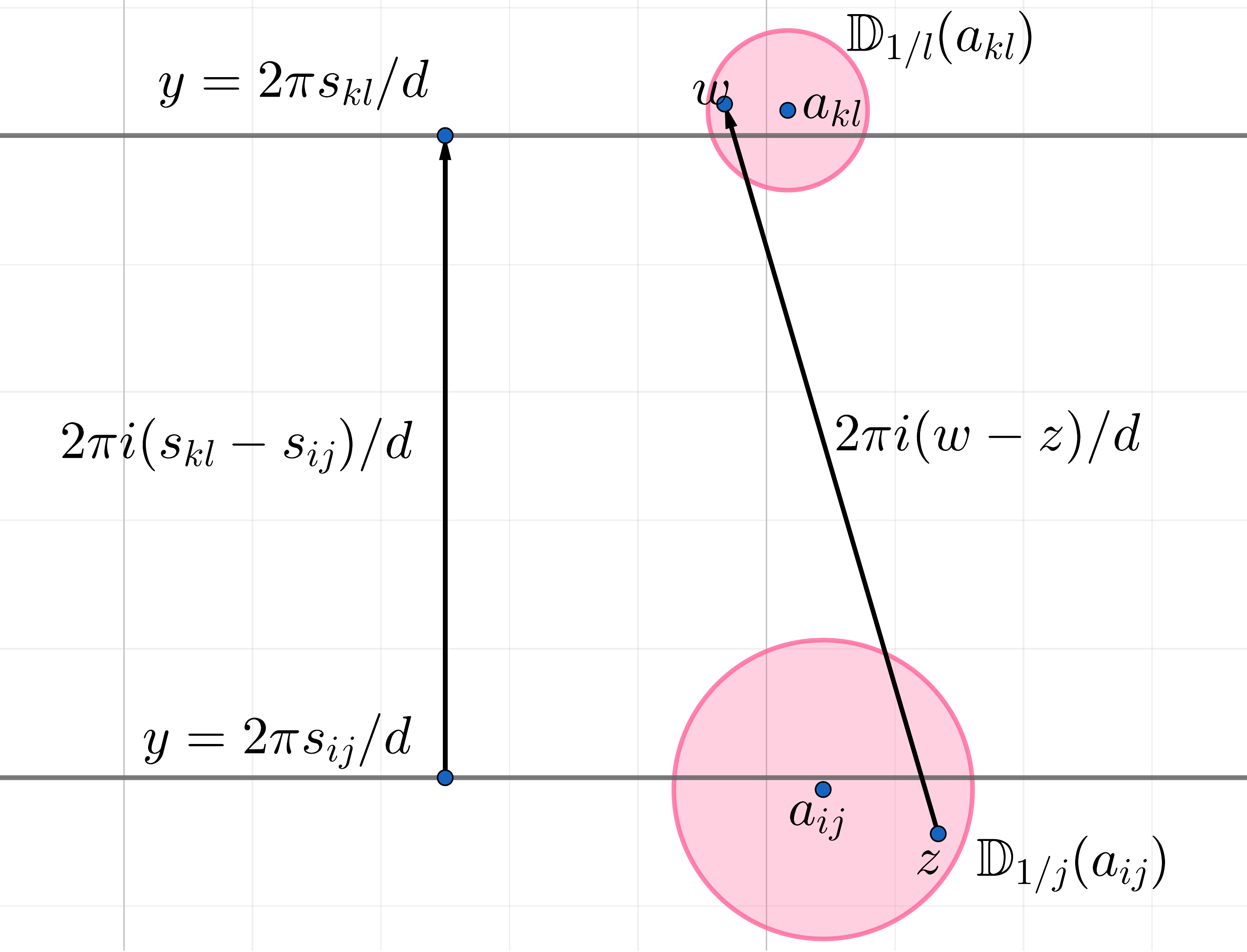}
	\caption{Schematic construction of $D_{ij}^{kl}$.}
	\label{pic:Bijkl}
\end{figure}

The next notion helps to characterize the difference between marked point that have the same potential but are in different clusters.
\begin{defn}[$D_{ij}^{kl}$]
	\label{defn:Dijkl}
	For $a_{ij}$ and $a_{kl}$ having the same potential  but belonging to different clusters define 
	$$D_{ij}^{kl}:=\left\{\frac{d(w-z)}{2\pi i(s_{kl}-s_{ij})}: w\in\mathbb{D}_{1/l}(a_{kl}), z\in\mathbb{D}_{1/j}(a_{ij})\right\}.$$	
\end{defn}

In other words, if we surround $a_{kl}$ and $a_{ij}$ by the disks of radii $1/l$ and $1/j$, then $D_{ij}^{kl}$ is just the difference of these two balls divided over the normalizing coefficient $2\pi i(s_{kl}-s_{ij})/d$. This is illustrated on Figure~\ref{pic:Bijkl}.

It is easy to see from Definition~\ref{defn:Dijkl} that $D_{ij}^{kl}$ is just a disk.

\begin{lmm}
	\label{lmm:Dijkl_is_disk}
	$D_{ij}^{kl}=\mathbb{D}_{r_{ij}^{kl}}\left(\frac{d(a_{kl}-a_{ij})}{2\pi i(s_{kl}-s_{ij})}\right)$ where $r_{ij}^{kl}=\frac{d(1/l+1/j)}{2\pi\abs{s_{kl}-s_{ij}}}$.
	
	If $j$ and $l$ are big enough, then
	\begin{enumerate}
		\item $1\in D_{ij}^{kl}$,
		\item $D_{ij}^{kl}$ is contained in arbitrarily small neighborhoods of $1$.
	\end{enumerate}
\end{lmm}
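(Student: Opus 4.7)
The plan is to reduce both claims to a short computation with the asymptotic expansion \eqref{eqn:as_formula}. The identity describing $D_{ij}^{kl}$ as a disk follows from disk arithmetic: the Minkowski difference
\begin{equation*}
\{w-z:w\in\mathbb{D}_{1/l}(a_{kl}),\,z\in\mathbb{D}_{1/j}(a_{ij})\}=\mathbb{D}_{1/j+1/l}(a_{kl}-a_{ij})
\end{equation*}
is an open disk, and multiplication by the nonzero complex constant $\lambda:=d/(2\pi i(s_{kl}-s_{ij}))$ is an affine self-map of $\mathbb{C}$. This constant is well-defined because two points in different clusters sharing a common potential must have distinct first external-address entries, forcing $s_{kl}\neq s_{ij}$. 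Hence the image of the Minkowski difference is the disk centered at $\lambda(a_{kl}-a_{ij})$ of radius $\abs{\lambda}(1/j+1/l)$, which matches the claimed formula.

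For the qualitative statements (1) and (2), I would set $t:=t_{ij}=t_{kl}$ and substitute \eqref{eqn:as_formula} into the expression for the center. Subtracting the two expansions yields
\begin{equation*}
a_{kl}-a_{ij}=\frac{2\pi i(s_{kl}-s_{ij})}{d}+O(e^{-t/2}),
\end{equation*}
and hence
\begin{equation*}
\frac{d(a_{kl}-a_{ij})}{2\pi i(s_{kl}-s_{ij})}=1+O\!\left(\frac{e^{-t/2}}{\abs{s_{kl}-s_{ij}}}\right).
\end{equation*}
Since $s_{kl},s_{ij}$ are distinct integers, $\abs{s_{kl}-s_{ij}}\geq 1$, so the center differs from $1$ by $O(e^{-t/2})$. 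As both $j$ and $l$ tend to infinity, the common potential $t$ tends to infinity (because $t_{i,j+1}=F(t_{ij})$ with $F$ super-exponential), so the center converges to $1$ while the radius $r_{ij}^{kl}=\frac{d(1/j+1/l)}{2\pi\abs{s_{kl}-s_{ij}}}$ tends to $0$; this yields (2).

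For (1), the containment $1\in D_{ij}^{kl}$ amounts to $\abs{1-\lambda(a_{kl}-a_{ij})}\leq r_{ij}^{kl}$. After cancelling the common factor $d/(2\pi\abs{s_{kl}-s_{ij}})$ from both sides, this reduces to $\const\cdot e^{-t/2}\leq 1/j+1/l$ for an absolute constant. Since $t_{ij}$ grows at least double-exponentially in $j$ (as $F(t)=e^{dt}-1$ dominates $e^{dt}/2$ for $t$ large), the left-hand side decays super-exponentially while the right-hand side decays only polynomially, so the inequality is automatic once $j$ and $l$ are sufficiently large, which establishes (1).

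I do not foresee any genuine obstacle: the lemma is a short manipulation of \eqref{eqn:as_formula}. The only point needing minor attention is to keep the implicit constants in the asymptotic expansion uniform across the two orbits containing $a_{ij}$ and $a_{kl}$, which is guaranteed by the uniformity clause of Theorem~\ref{thm:as_formula}.
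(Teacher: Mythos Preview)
Your proposal is correct and follows essentially the same approach as the paper's proof: both identify the disk via the Minkowski difference, compute the center as $1+O(e^{-t/2})/\abs{s_{kl}-s_{ij}}$ from \eqref{eqn:as_formula}, and use $\abs{s_{kl}-s_{ij}}\geq 1$ together with the decay rates to conclude (1) and (2). Your write-up is in fact more explicit than the paper's, which simply asserts that the $O(e^{-t/2})$ term is eventually smaller than $r_{ij}^{kl}$ without spelling out the super-exponential-versus-polynomial comparison.
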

\begin{proof}
	That $D_{ij}^{kl}=\mathbb{D}_{r_{ij}^{kl}}\left(\frac{d(a_{kl}-a_{ij})}{2\pi i(s_{kl}-s_{ij})}\right)$ is a direct consequence of Definition~\ref{defn:Dijkl}.
	
	\textbf{(1)} It is implied by Definition~\ref{defn:Dijkl} that the potentials of $a_{ij}$ and $a_{kl}$ are equal: $t_{ij}=t_{kl}$. Hence from the asymptotic formula follows that
	$$\frac{d(a_{kl}-a_{ij})}{2\pi i(s_{kl}-s_{ij})}=\frac{(s_{kl}-s_{ij})+O(e^{-t_{ij}/2})}{(s_{kl}-s_{ij})}=1+\frac{O(e^{-t_{ij}/2})}{\abs{s_{kl}-s_{ij}}}.$$
	
	The second summand is smaller than $r_{ij}^{kl}$ for big $j,l$. The claim follows.
	
	\textbf{(2)} Since $a_{ij}$ and $a_{kl}$ belong to different clusters, we have $\abs{s_{kl}-s_{ij}}\geq 1$. Hence $r_{ij}^{kl}\to 0$ as $j,l\to \infty$.
\end{proof}

Next lemma provides an estimate on how the Thurston iteration changes the relative position of marked points inside of the clusters with sufficiently high potentials: for points $a_{ij},a_{kl}$ belonging to the same cluster with potential $t$, the number $\hat{\varphi}(a_{kl})-\hat{\varphi}(a_{ij})$ is equal to $\varphi_u(a_{k(l+1)})-\varphi_u(a_{i(j+1)})$ scaled by $F'(t)$ and multiplied by a correcting coefficient $\alpha\in A_{t,0}$ (which is getting arbitrarily close to $1$ as $t$ grows).

\begin{prp}[Negligible rotation within clusters]
	\label{prp:good_big_disk_around_origin_cluster_estimates}
	For all $\rho\in\mathcal{P}'$ big enough holds the following statement.
	
	If $a_{ij},a_{kl}\notin{D}_\rho$ belong to the same cluster $Cl(t,s)$, and $\varphi_u, u\in[0,1]$ is an isotopy \idt\ maps such that $\varphi_0=c^{-1}$ and for all $u\in[0,1]$:
	\begin{enumerate}
		\item $\varphi_u(\SV(f))\subset D_\rho$,
		\item $\abs{\varphi_u(a_{i(j+1)})-a_{i(j+1)}}<1/(j+1)$,
		\item $\abs{\varphi_u(a_{k(l+1)})-a_{k(l+1)}}<1/(l+1)$,
	\end{enumerate}
	then for every $u\in[0,1]$ holds
	$$\hat{\varphi}_u(a_{kl})-\hat{\varphi}_u(a_{ij})=\frac{\varphi_u(a_{k(l+1)})-\varphi_u(a_{i(j+1)})}{F'(t)}\alpha_u,$$
	where $\alpha_u\in A_{t,0}$.
\end{prp}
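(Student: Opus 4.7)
\emph{Proof plan.} Applying the Thurston relation $g_u \circ \hat{\varphi}_u = \varphi_u \circ f$ at $a_{ij}$ and $a_{kl}$ gives
\[
g_u(z_0) = \varphi_u(a_{i(j+1)}), \qquad g_u(z_1) = \varphi_u(a_{k(l+1)}),
\]
where $z_0 := \hat{\varphi}_u(a_{ij})$ and $z_1 := \hat{\varphi}_u(a_{kl})$; by Theorem~\ref{thm:pullback_of_id_type}, $g_u = p_u \circ \exp$ with $p_u$ a monic polynomial of degree $d$ varying continuously in $u$. Subtracting the two relations and using holomorphicity of $g_u$, the fundamental theorem of calculus yields
\[
\varphi_u(a_{k(l+1)}) - \varphi_u(a_{i(j+1)}) = (z_1 - z_0)\,\beta_u, \qquad \beta_u := \int_0^1 g_u'\bigl(z_0 + r(z_1 - z_0)\bigr)\, dr.
\]
Hence the desired formula is equivalent to $\alpha_u = F'(t)/\beta_u \in A_{t,0}$, which in turn reduces to the quantitative claim $\beta_u = F'(t)\cdot(1 + O(e^{-t/2}))$, uniformly in $u$, for $\rho$ (hence $t$) large enough.

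Next I would estimate $g_u'$ and localize the integration segment. Condition~(1) keeps the singular values of $g_u$, namely $\varphi_u(\SV(f))$, inside $D_\rho$; combined with Theorem~\ref{thm:pullback_of_id_type} and the compactness of $[0,1]$, this forces the coefficients of the monic polynomials $p_u$ to be uniformly bounded. Hence for $\Re\zeta$ large,
\[
g_u'(\zeta) = p_u'(e^\zeta)\,e^\zeta = d\,e^{d\zeta}\bigl(1 + O(e^{-\Re\zeta})\bigr) = F'(t)\,e^{d(\zeta - t)}\bigl(1 + O(e^{-\Re\zeta})\bigr),
\]
with the implicit constant independent of $u$. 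The asymptotic formula~(\ref{eqn:as_formula}), applied to points in the same cluster, gives $a_{ij}, a_{kl} = t + 2\pi i s/d + O(e^{-t/2})$. To transfer this to $z_0, z_1$, I invoke a local-inversion argument: since the critical values of $p_u$ lie in $D_\rho$ while $e^{a_{ij}}$ has very large modulus, there is a well-defined branch of $g_u^{-1}$ on a large disk around $a_{i(j+1)}$ mapping biholomorphically onto a neighborhood of $a_{ij}$, with derivative of order $1/\abs{F'(t)}$; by condition~(2) it sends $\varphi_u(a_{i(j+1)}) \in \mathbb{D}_{1/(j+1)}(a_{i(j+1)})$ into an $O(e^{-t})$-neighborhood of $a_{ij}$. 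That $\hat{\varphi}_u$ realizes precisely this branch is guaranteed by continuity of the $\id$-type isotopy $\hat{\varphi}_u$ starting from $\hat{\varphi}_0 = \id$ (which follows from Theorem~\ref{thm:pullback_of_id_type} applied at $u = 0$, since then $\varphi_0 \circ f = f_0$ is already entire). The analogous analysis for $a_{kl}$ yields $z_1 = a_{kl} + O(e^{-t})$, so $\gamma_u := [z_0, z_1]$ is contained in an $O(e^{-t/2})$-neighborhood of $t + 2\pi i s/d$.

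Combining, on $\gamma_u$ one has $d(\zeta - t) = 2\pi i s + O(e^{-t/2})$, so $e^{d(\zeta - t)} = 1 + O(e^{-t/2})$, and the uniform estimate on $g_u'$ gives $g_u'(\zeta) = F'(t)\,(1 + O(e^{-t/2}))$ throughout $\gamma_u$; integrating yields $\beta_u = F'(t)\,(1 + O(e^{-t/2}))$ and $\alpha_u = 1 + O(e^{-t/2})$. For $\rho$ large enough this error is smaller than $e^{-t/3}$, simultaneously giving $\abs{\log\abs{\alpha_u}} < e^{-t/3}$ and $\abs{\arg \alpha_u} < e^{-t/3}$, i.e.\ $\alpha_u \in A_{t,0}$. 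The main obstacle is the localization step: one must argue quantitatively that the $\id$-type structure pins $z_0$ to the correct branch of $g_u^{-1}$ near $a_{ij}$, even though the hypotheses constrain only $\varphi_u$ and not $\hat{\varphi}_u$ directly; here the continuity of the $\id$-type isotopy together with the separation of the critical values of $p_u$ from $e^{a_{ij}}$ are genuinely essential.
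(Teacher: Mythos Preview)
Your overall strategy coincides with the paper's: both write
\[
\varphi_u(a_{k(l+1)})-\varphi_u(a_{i(j+1)})=\int_{\gamma_u} g_u'(z)\,dz
\]
along the segment $\gamma_u=[z_0,z_1]$, use that $g_u=p_u\circ\exp$ is monic to reduce $g_u'(z)/F'(t)$ to $e^{d(z-t)}+O(e^{-t/2})$, then localize $\gamma_u$ near $t+2\pi is/d$ and read off $\alpha_u=1+O(e^{-t/2})\in A_{t,0}$.

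The one substantive difference is the localization of $z_0,z_1$. The paper does not argue it in situ: it simply quotes \cite[Proposition~5.3]{IDTT2}, which under hypothesis~(1) already yields $\hat\varphi_u(a_{ij})-a_{ij}=O(e^{-t/2})$ for every $a_{ij}\notin D_\rho$. Your branch-tracking alternative is a legitimate way to recover this, but two points are loose. First, the coefficient bound: compactness of $[0,1]$ gives uniformity in $u$ only for a \emph{fixed} isotopy, whereas the quantifier ``for all $\rho$ big enough'' demands uniformity over all admissible isotopies; the right source is Lemma~\ref{lmm:bounds_of_coefficients_through_SV}, which gives $|b_k^u|<L\rho^{(d-k)/d}$ directly from~(1), independently of the isotopy. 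Second, condition~(2) alone does not place $z_0$ within $O(e^{-t})$ of $a_{ij}$: the dominant displacement comes from $g_u(a_{ij})\neq a_{i(j+1)}$, namely $g_u(a_{ij})-f_0(a_{ij})=(p_u-p_0)(e^{a_{ij}})=O(\rho\,e^{(d-1)t})$, which after dividing by $|g_u'|\asymp F'(t)$ gives $z_0-a_{ij}=O(\rho\,e^{-t})$. This is still comfortably $O(e^{-t/2})$, so once these two corrections are made your argument and the paper's merge.
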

\begin{proof}
	Note that the statement of the theorem is vacuous if $d=1$: we do not have clusters containing more than one point if we only have one singular orbit.
	
	Let $g_u=\varphi_u\circ f\circ\hat{\varphi}_u^{-1}=q_u\circ \exp$ with $q_u(z)=z^d+b_{d-1}^u z^{d-1}+...+b_1^u z+b_0^u$ and $g_0=f_0$
	
	From \cite[Proposition~5.3]{IDTT2} we know that for $\rho$ big enough and every $a_{ij}\notin D_\rho$ holds $$\abs{\hat{\varphi}_u(a_{ij})-a_{ij}}<\frac{1}{j}.$$ On the other hand, from the formula~\ref{eqn:as_formula} we know that $\Im a_{ij}-2\pi s_{ij}\to 0$ as $j\to\infty$. Hence $\Im\hat{\varphi}_u(a_{ij})-2\pi s_{ij}\to 0$ as $j\to\infty$.
	
	Now let $a_{ij},a_{kl}\notin D_\rho$ belong to the same cluster $Cl(t,s)$, and $\gamma_u$ is the straight line segment joining $\hat{\varphi}_u(a_{ij})$ and $\hat{\varphi}_u(a_{kl})$. For all $\rho$ big enough we have	
	$$\varphi_{u}(a_{kl})-\varphi_{u}(a_{ij})=g_u(\hat{\varphi}_u(a_{kl}))-g_u(\hat{\varphi}_u(a_{ij}))=$$
	$$\int\limits_{\gamma_u}g_u'(z)dz=F'(t)\int\limits_{\gamma_u}\frac{g_u'(z)}{F'(t)}dz$$
	
	From Lemma~\ref{lmm:bounds_of_coefficients_through_SV} we know that $\abs{b_k^u}<L\rho^\frac{d-k}{d}$. Since $t/\rho\to\infty$ as $\rho\to\infty$ and $d>1$, the last expression is equal to	
	$$F'(t)\int\limits_{\gamma_u}\left(e^{d\left(z-t\right)}+O\left(e^{-t/2}\right)\right)dz$$
	
	In the proof of the item (3) of \cite[Proposition~5.3]{IDTT2} we obtained the estimate $\hat{\varphi}_u(a_{ij})-a_{ij}=O\left(e^{-t/2}\right)$, and from the asymptotic formula~\ref{eqn:as_formula} we know that $a_{ij}=t+2\pi i s/d+ O(e^{-t/2})$. Hence, $\hat{\varphi}_u(a_{ij})=t+2\pi i s/d+O\left(e^{-t/2}\right)$. Analogous formula holds for $a_{kl}$: $\hat{\varphi}_u(a_{kl})=t+2\pi i s/d+O\left(e^{-t/2}\right)$. Consequently, for every $z\in\gamma_u$ we also have $z=t+2\pi i s/d+O\left(e^{-t/2}\right)$. Thus,	
	$$F'(t)\int\limits_{\gamma}\left(e^{d\left(z-t\right)}+O\left(e^{-\frac{t}{2}}\right)\right)dz=F'(t)\int\limits_{\gamma}\left(e^{O\left(\exp \left(-\frac{t}{2}\right)\right)}+O\left(e^{-\frac{t}{2}}\right)\right)dz=$$	
	$$F'(t)\int\limits_{\gamma}\left(1+O\left(e^ {-\frac{t}{2}}\right)\right)dz=F'\left(t\right)\left(\hat{\varphi}_u\left(a_{kl}\right)-\hat{\varphi}_u\left(a_{ij}\right)\right)\left(1+O\left(e^ {-\frac{t}{2}}\right)\right)=$$
	$$\frac{F'(t)}{1+O\left(e^ {-\frac{t}{2}}\right)}\left(\hat{\varphi}_u(a_{kl})-\hat{\varphi}_u(a_{ij})\right).$$
	
	Let $\alpha_u=1+O\left(e^ {-t/2}\right)$ be the denominator in the last expression, and let $\abs{O\left(e^ {-\frac{t}{2}}\right)}<Ce^ {-\frac{t}{2}}$ for some $C>0$. Choose $\rho$ big enough so that 
	$$2Ce^ {-\frac{t}{2}}<e^{-\frac{t}{3}}<1/2.$$
	Then 
	$$1-e^{-\frac{t}{3}}<\abs{\alpha_u}< 1+e^{-\frac{t}{3}}.$$
	After taking logarithm we obtain $\abs{\log\abs{\alpha_u}}<e^{-t/3}$. Also we have
	$$\abs{\sin\arg\alpha_u}<\frac{Ce^ {-t/2}}{1-Ce^ {-t/2}}<2Ce^ {-t/2}.$$
	Hence, 
	$$-e^{-t/3}<\arg\alpha_u<e^{-t/3}.$$
	Thus $\alpha_u\in A_{t,0}$.
\end{proof}

We also need another piece of notation.

\begin{defn}[$H,L$]
	For every pair $a_{ij},a_{kl}$ belonging to the same cluster define
	\begin{itemize}
		\item $H=H(a_{ij},a_{kl})$ to be the smallest positive integer such that $a_{i(j+H)}$ and $a_{k(l+H)}$ belong to different clusters,
		\item $L=L(a_{ij},a_{kl})$ to be the smallest positive integer such that $a_{i(j-L)}$ and $a_{k(l-L)}$ are defined and belong to different clusters, if there is no such integer we define $L:=\infty$.
	\end{itemize}	
\end{defn}

Note that $H(a_{ij},a_{kl})$ is always less than infinity: otherwise this would mean that the points $a_{ij},a_{kl}$ had the same potential and external address. From Theorem~\ref{thm:as_formula} it then follows $a_{ij}=a_{kl}$.

Now, we prove an easy corollary of Proposition~\ref{prp:good_big_disk_around_origin_cluster_estimates} which provides cluster estimates for the singular orbits of $f$. Roughly speaking, it says that if two points are in the same cluster, we can estimate the distance between them if we know after how many iterations they land in different clusters. Basically, this is a scaling by the real constant $d(F^{\circ H}(t))'$ multiplied by correcting coefficients $\nu$ and $\delta$.

\begin{lmm}[Cluster estimates for $P_f$]
	\label{lmm:cluster_estimates_P_f}
	For all $j$ big enough holds the following statement.
	
	If $a_{ij}$ and $a_{kl}$ are in the same cluster, then
	\begin{equation}
		\label{eqn:cluster_estimates}
		a_{kl}-a_{ij}=\frac{2\pi i(s_{k(l+H)}-s_{i(j+H)})}{d(F^{\circ H}(t))'}\nu\delta,
	\end{equation}
	where $t$ is the potential of $a_{ij}$ and $a_{kl}$, $H=H(a_{ij},a_{kl})$, $\nu=\nu(i,j,k,l)\in A_{t,H-1}$ and $\delta=\delta(i,j,k,l)\in D_{i(j+H)}^{k(l+H)}$.	
\end{lmm}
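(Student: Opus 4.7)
The plan is to derive the identity by iterating, $H$ times, the one-step estimate that drives the proof of Proposition~\ref{prp:good_big_disk_around_origin_cluster_estimates}, but applied to $f_0$ itself rather than to a $\sigma$-pullback. The key observation is that $\{a_{ij}\}$ is a forward orbit under $f_0$, so $f_0(a_{i(j+r)})=a_{i(j+r+1)}$, and for $r=0,\dots,H-1$ the pair $(a_{i(j+r)},a_{k(l+r)})$ still belongs to a common cluster $Cl(F^r(t),s_r)$ by definition of $H$; only after the $H$-th iterate do these two points fall into different clusters, which is precisely where the factor $\delta\in D_{i(j+H)}^{k(l+H)}$ enters.

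For each $r\in\{0,\dots,H-1\}$ let $\gamma_r$ denote the straight line segment from $a_{i(j+r)}$ to $a_{k(l+r)}$, and write
$$a_{k(l+r+1)}-a_{i(j+r+1)}=f_0(a_{k(l+r)})-f_0(a_{i(j+r)})=\int_{\gamma_r}f_0'(z)\,dz.$$
By the asymptotic formula~\ref{eqn:as_formula}, both endpoints of $\gamma_r$ (and hence, by convexity, every point on it) lie within $O(e^{-F^r(t)/2})$ of $F^r(t)+2\pi is_r/d$; since $f_0=p\circ\exp$ with $p$ monic of degree $d$, this yields $f_0'(z)=F'(F^r(t))\cdot(1+O(e^{-F^r(t)/2}))$ uniformly on $\gamma_r$. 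Repeating the bookkeeping from the proof of Proposition~\ref{prp:good_big_disk_around_origin_cluster_estimates} almost verbatim then shows, for $j$ large enough that $F^r(t)\geq t'$ from Lemma~\ref{lmm:good_behavior_of_f_0} for every $r$ involved,
$$a_{k(l+r+1)}-a_{i(j+r+1)}=F'(F^r(t))\cdot\alpha_r\cdot(a_{k(l+r)}-a_{i(j+r)})$$
with $\alpha_r\in A_{F^r(t),\,0}$.

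Chaining these $H$ equalities and invoking the chain rule $(F^{\circ H})'(t)=\prod_{r=0}^{H-1}F'(F^r(t))$ produces
$$a_{k(l+H)}-a_{i(j+H)}=(F^{\circ H})'(t)\cdot\nu'\cdot(a_{kl}-a_{ij}),\qquad \nu':=\prod_{r=0}^{H-1}\alpha_r.$$
Pseudo-Multiplicativity places $\nu'$ in $A_{t,H-1}$, and since the defining conditions on $|\log|\alpha||$ and $|\arg\alpha|$ are symmetric under $\alpha\mapsto\alpha^{-1}$, also $\nu:=(\nu')^{-1}\in A_{t,H-1}$. Setting $\delta:=d(a_{k(l+H)}-a_{i(j+H)})/(2\pi i(s_{k(l+H)}-s_{i(j+H)}))$ and using the witnesses $w=a_{k(l+H)}$, $z=a_{i(j+H)}$ in Definition~\ref{defn:Dijkl} puts $\delta\in D_{i(j+H)}^{k(l+H)}$. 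Solving for $a_{kl}-a_{ij}$ and substituting gives exactly equation~\ref{eqn:cluster_estimates}.

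The only delicate point is the pointwise estimate for $f_0'(z)$ along $\gamma_r$: the asymptotic formula directly controls only the two endpoints, so one must use convexity of a narrow tube around $F^r(t)+2\pi is_r/d$ (and the fact that $F^r(t)$ is large) to propagate the estimate uniformly along the whole segment. Once this estimate is in hand, everything else is a formal iteration of the one-step calculation combined with the bookkeeping made possible by Pseudo-Multiplicativity of $A_{x,n}$.
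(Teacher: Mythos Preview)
Your argument is correct and follows essentially the same route as the paper. The paper's proof is a one-liner: it applies Proposition~\ref{prp:good_big_disk_around_origin_cluster_estimates} $H$ times with the constant isotopy $\varphi_u\equiv c^{-1}$ (so that $\hat\varphi_u=\id$ and $g_u=f_0$), which directly yields $a_{kl}-a_{ij}=\frac{a_{k(l+1)}-a_{i(j+1)}}{F'(t)}\alpha$ with $\alpha\in A_{t,0}$, and then chains. You instead re-derive this one-step identity for $f_0$ from scratch, which is fine but unnecessary once Proposition~\ref{prp:good_big_disk_around_origin_cluster_estimates} is in hand; citing it with $\varphi_u\equiv c^{-1}$ also spares you the inversion step, since the $\alpha$-factors land in the numerator to begin with.
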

\begin{proof}
	One simply needs to note that
	$$a_{k(l+H)}-a_{i(j+H)}=\frac{2\pi i(s_{k(l+H)}-s_{i(j+H)})}{d}\delta$$
	where $\delta\in D_{i(j+H)}^{k(l+H)}$,
	and to apply $H$ times the Proposition~\ref{prp:good_big_disk_around_origin_cluster_estimates} with $\varphi_u\equiv c^{-1}$, obtaining at the step $n$ the estimates for $a_{k(l+H-n)}-a_{i(j+H-n)}$.
\end{proof}

We also need to introduce special coefficients $\beta(a_{ij},a_{kl})$, in order to estimate relative distances between marked points outside of $D_\rho$.

\begin{defn}[$\beta$]
	Let $a_{ij},a_{kl}\in P_f$.	Denote
	$$\beta(a_{ij},a_{kl}):=
	\begin{cases}
		\frac{\pi(s_{k(l+H)}-s_{i(j+H)})}{d(F^{\circ H}(t))'} & \mbox{if\,} a_{ij},a_{kl} \mbox{\,are in the same cluster $Cl(t,s)$;}\\
		\pi/{2d}& \mbox{if\,} a_{ij},a_{kl} \mbox{\,are in different clusters;}
	\end{cases}$$	
	where $H=H(a_{ij},a_{kl})$.
\end{defn} 

From Lemma~\ref{lmm:cluster_estimates_P_f} and the asymptotic formula~\ref{eqn:as_formula} follows immediately that for all pairs $a_{ij}, a_{kl}$ such that $j$ is big enough holds
\begin{equation}
	\label{eqn:bigger_than_beta}
	\abs{a_{ij}-a_{kl}}>\beta(a_{ij},a_{kl});
\end{equation}
and for all pairs $a_{ij},a_{kl}$ belonging to the same cluster such that $j$ is big enough holds
\begin{equation}
	\label{eqn:smaller_than_beta}
	\abs{a_{ij}-a_{kl}}<4\beta(a_{ij},a_{kl}).
\end{equation}

\subsection{No combinatorics near $\infty$ (almost)}

The next proposition is analogous to \cite[Proposition~5.6]{IDTT2} and is supposed to provide estimates on the homotopy type of spider legs in presence of non-trivial clusters.

\begin{prp}[No combinatorics near $\infty$]
	\label{prp:constant_homotopy_near_infinity}
	There exists a constant $C>0$ such that the following statement holds for all $\rho\in\mathcal{P}'$ big enough.
	
	If $\varphi_u, u\in[0,1]$ is an isotopy of $\id$-type maps satisfying:
	\begin{enumerate}
		\item $\varphi_0|_{\mathbb{C}\setminus\mathbb{D}_\rho}=\id$,		
		\item $\varphi_u(a_{ij})\in D_\rho$ for $j\leq N_i$,
		\item $\abs{\varphi_u(a_{ij})-a_{ij}}<1/j$ for $j>N_i$,
		\item for all $a_{ij},a_{kl}$ with $j>N_i,l>N_k$, belonging to the same cluster holds
		$$\varphi_u(a_{kl})-\varphi_u(a_{ij})=\frac{2\pi i(s_{k(l+H)}-s_{i(j+H)})}{d(F^{\circ H}(t))'}\nu_u\delta_u$$
		where $t$ is the potential of $a_{ij}$ and $a_{kl}$, $H=H(a_{ij},a_{kl})$, $\nu_u=\nu_u(\varphi_u,i,j,k,l)\in A_{t,H-1}$ and $\delta_u=\delta_u(\varphi_u,i,j,k,l)\in D_{i(j+H)}^{k(l+H)}$,
	\end{enumerate}
	then for every $u$ and every $j>N_i$ we have $$\abs{W_{ij}^{\varphi_u}}<C.$$
\end{prp}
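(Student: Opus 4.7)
The plan is to adapt the argument of \cite[Proposition~5.6]{IDTT2} from the non-degenerate case, treating each non-trivial cluster as a single effective point thanks to condition~(4). Recall that the leg homotopy word $W_{ij}^{\varphi_u}$ measures the winding of the leg $\varphi_u(R_{ij})$ around $\varphi_u(\mathcal{O}_{ij})$ inside a sufficiently large right half-plane; equivalently, up to free reduction, it records the signed crossings of $\varphi_u(R_{ij})$ with the horizontal rays going from each marked point to $+\infty$. Since $\varphi_u$ is of $\id$-type, $\varphi_u(R_{ij})$ asymptotes near $\infty$ to the horizontal line at height $2\pi s_{ij}/d$, so only marked points sitting near this height can produce crossings.

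First I would split $\mathcal{O}_{ij}$ into $\mathcal{O}_{ij}\cap D_\rho$ and its complement. The first set is finite with cardinality bounded uniformly in terms of $\rho$ via Lemma~\ref{lmm:good_behavior_of_f_0}, so by the topological argument of \cite[Proposition~5.6]{IDTT2} its contribution to $\abs{W_{ij}^{\varphi_u}}$ is bounded by a constant $C_1=C_1(\rho)$, where condition~(1) supplies the baseline word at $u=0$ and the continuity of the isotopy limits the accumulated change. For marked points outside $D_\rho$, combining condition~(3) with the asymptotic formula~\ref{eqn:as_formula} shows that each $\varphi_u(a_{kl})$ stays within $o(1)$ of the horizontal line at height $2\pi s_{kl}/d$. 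Since different clusters are vertically separated by gaps of at least $2\pi/d$, only clusters with $s_{kl}=s_{ij}$ can interact with $\varphi_u(R_{ij})$. Among those, clusters with potential strictly less than $t_{ij}$ sit behind the starting point of the leg and contribute at most $O(1)$ each, while their count is controlled by the combinatorics of $\mathcal{O}_{ij}\cap D_\rho$.

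The delicate case is the cluster $Cl(t_{ij},s_{ij})$ of $a_{ij}$ itself, which contains at most $m-1$ other points $a_{kl}$ lying in $\mathcal{O}_{ij}$. For these, condition~(4) describes $\varphi_u(a_{kl})-\varphi_u(a_{ij})$ as a product of the small scaling factor $1/(d(F^{\circ H}(t_{ij}))')$ with perturbation terms $\nu_u\in A_{t_{ij},H-1}$ and $\delta_u\in D_{i(j+H)}^{k(l+H)}$. Pseudo-Multiplicativity of $A_{x,n}$ together with Lemma~\ref{lmm:Dijkl_is_disk} imply that for $\rho$ large these factors lie in arbitrarily small neighborhoods of $1$, so each companion point is pinned to a tiny disk around $\varphi_u(a_{ij})$ throughout the isotopy. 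Hence the leg can wind around them only in a combinatorially trivial way, contributing at most an absolute constant $C_2$. Summing, $\abs{W_{ij}^{\varphi_u}}<C_1+C_2=:C$ uniformly in $u$ and $j>N_i$.

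The main obstacle is bounding the accumulation of crossings along the isotopy $u\mapsto\varphi_u$ inside the unbounded region. The key ingredient is condition~(4): it forces all relative motion within a cluster to remain inside a small contractible disk for every $u\in[0,1]$, blocking any braiding. Verifying that this control is uniform in $\rho$ large enough, and that the number of ``dangerous'' clusters at height $2\pi s_{ij}/d$ that the leg can interact with is effectively finite for each fixed $ij$, constitutes the technical heart of the argument and closely parallels the non-degenerate analysis of \cite[Proposition~5.6]{IDTT2}.
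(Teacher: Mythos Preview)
Your decomposition produces a constant that depends on $\rho$ (and implicitly on $j$), which the statement forbids: $C$ must be fixed \emph{before} $\rho$ is chosen, and must work for every $j>N_i$. You bound the contribution of $\mathcal{O}_{ij}\cap D_\rho$ by $C_1(\rho)$, and for clusters outside $D_\rho$ at height $2\pi s_{ij}/d$ with potential $<t_{ij}$ you assert ``$O(1)$ each, with count controlled by $\mathcal{O}_{ij}\cap D_\rho$''. But those clusters are not in $D_\rho$, and their number is of order $j-N_i$, which is unbounded. So even setting aside the $\rho$-dependence of $C_1$, your bound is not uniform in $j$. The mechanism you are missing is that, for $j$ large, the ray tail $R_{ij}$ has a representative with strictly increasing real part \cite[Lemma~2.14]{IDTT2}, and this persists in the homotopy class of $\varphi_u(R_{ij})$ for every $u$. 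Hence only marked points whose real part lies within a fixed window of $\Re\varphi_u(a_{ij})$ can affect $W_{ij}^{\varphi_u}$; by the asymptotic formula~\ref{eqn:as_formula} there is a universal $M$, independent of $\rho$ and $j$, bounding the number of such points in $\mathcal{O}_{ij}$. Everything further left---all of $D_\rho$ and all clusters of smaller potential---is irrelevant to the word. With this in hand, \cite[Lemmas~6.6 and~6.8]{IDTT1} give the uniform $C$.

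Your treatment of the cluster $Cl(t_{ij},s_{ij})$ also needs one more step. Saying the companion points are ``pinned to a tiny disk around $\varphi_u(a_{ij})$'' does not by itself prevent winding, because the leg's endpoint sits inside that disk. What condition~(4) actually gives is that $\varphi_u(a_{kl})-\varphi_u(a_{ij})$ stays close to a fixed nonzero \emph{purely imaginary} number (since $\nu_u,\delta_u$ lie near $1$), so the points of the cluster remain vertically linearly ordered for all $u\in[0,1]$. It is this vertical ordering, combined with the monotone-real-part representative of the leg, that feeds into the cited lemmas and blocks any braiding contribution.
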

\begin{proof}
	The proof is almost the same as of \cite[Proposition~5.6]{IDTT2} except that we need to take care of possible non-trivial clusters.
		
	Note that if $j$ is big enough, than the points $a_{ij}\notin D_\rho$ belonging to different clusters move under $\varphi_u$ inside of mutually disjoint disks $\mathbb{D}_{1/j}(a_{ij})$, and mutual distance between such disks is bounded from below by $\pi/2d$. 
	
	On the other hand, since for big $t$ the sets $A_{t,H-1}$ and $D_{i(j+H)}^{k(l+H)}$ are contained in arbitrarily small neighborhoods of $1$, for all essentially big $t$ the marked points $\varphi_u(a_{ij})$ inside of a cluster with potential $t$ have different imaginary parts and this holds for all $u\in[0,1]$, i.e.\ the points inside of clusters with high enough potentials are vertically linearly ordered. 
	
	In \cite[Lemma~2.14]{IDTT2} we proved that for all big enough $j$ the ray tail $R_{ij}=\varphi_0(R_{ij})$ has the strictly increasing real part. Hence, for every $u\in[0,1]$ in the homotopy class (relative $P_f$) of $\varphi_u(R_{ij})$ there is a representative with the strictly increasing real part.
	
	Next, due to the asymptotic formula~\ref{eqn:as_formula} there is a universal constant $M>0$ such that for every $a_{ij}\in P_f$, every isotopy $\varphi_u$ satisfying conditions of the proposition, and every $u\in[0,1]$ there is at most $M$ points $a_{kl}\in \mathcal{O}_{ij}$ with $\Re\varphi_u(a_{kl})>\Re\varphi_u(a_{ij})-1$. Hence, from \cite[Lemma~6.6]{IDTT1} and \cite[Lemma~6.8]{IDTT1} we see that there exists a universal constant $C$ such that for all $\rho$ big enough and all $j>N_i$ we have
	$\abs{W_{ij}^{\varphi_u}}<C$.	
\end{proof} 

\subsection{Separation of preimages in $D_\rho$}

As in \cite{IDTT2}, we need a tool to ``separate'' the marked points inside of $D_\rho$. This tools is a simple ``division over the maximum of derivative''. However, as will be seen in the Theorem~\ref{thm:invariant_subset}, more control will be needed than in the non-degenerate case \cite{IDTT2}.

\begin{defn}[Maximum of the derivative \cite{IDTT2}]
	For $\rho=\frac{t_n+t_{n+1}}{2}\in\mathcal{P}'$, define $$M_\rho:=\sup\limits_{\substack{g\in\mathcal{N}_d \\ SV(g)\subset D_\rho}}\sup\limits_{\Re z<{(d+1) t_n}}\abs{g'(z)}.$$	
\end{defn}

For all $\rho$ big enough, $M_\rho$ satisfies the following inequalities:
\begin{equation}
	\label{eqn:M_rho}
	\rho<M_\rho<Ke^{d^3 t_n}
\end{equation}
for some $K>0$ depending only on $d$. The former inequality follows directly from the definition of $M_\rho$, and the latter from Lemma~\ref{lmm:bounds_of_coefficients_through_SV}.

Next proposition helps to estimate mutual distances between marked points inside of $D_\rho$. 

\begin{prp}[Separation of preimages \cite{IDTT2}]
	\label{prp:division_over_derivative}	
	For all $\rho\in\mathcal{P}'$ big enough holds the following statement.
	
	If $\varphi$ is an $\id$-type map such that $\varphi(\SV(f))\subset D_\rho$, and points $x,y\in\mathbb{C}\setminus\SV(f)$ are satisfy $$\varphi\circ f(x),\varphi\circ f(y)\in\overline{\mathbb{D}}_{\max\{|a_{i(N_i+1)}|+1\}}(0),$$ then
	$$\abs{\hat{\varphi}(x)-\hat{\varphi}(y)}\geq\frac{\abs{\varphi\circ f(x)-\varphi\circ f(y)}}{M_\rho}.$$
\end{prp}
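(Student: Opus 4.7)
\textbf{The plan} is to pass through the Thurston pullback so that the claim becomes a straight-line mean-value estimate for the entire function $g:=\varphi\circ f\circ\hat\varphi^{-1}$. Set $\tilde x:=\hat\varphi(x)$ and $\tilde y:=\hat\varphi(y)$. By Theorem~\ref{thm:pullback_of_id_type} the map $\hat\varphi$ is of $\id$-type and $g=q\circ\exp$ lies in $\mathcal{N}_d$; since $\SV(g)=\varphi(\SV(f))\subset D_\rho$, the coefficients of the monic polynomial $q$ satisfy the bounds of Lemma~\ref{lmm:bounds_of_coefficients_through_SV}. In these terms the target inequality reads $|\tilde x-\tilde y|\geq |g(\tilde x)-g(\tilde y)|/M_\rho$, since $g(\tilde x)=\varphi\circ f(x)$ and $g(\tilde y)=\varphi\circ f(y)$ by construction.

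\textbf{Real-part confinement.} With $\rho=(t_n+t_{n+1})/2$, the crucial step is to verify that $\Re\tilde x,\Re\tilde y<(d+1)t_n$. For any $z$ with $\Re z\geq(d+1)t_n$, set $w=e^z$ (so $|w|\geq e^{(d+1)t_n}$); the coefficient bound $|b_k|<L\rho^{(d-k)/d}$ makes the leading monomial $w^d$ dominate $q(w)$ for $\rho$ large, giving $|g(z)|\geq\tfrac{1}{2}e^{d(d+1)t_n}$. On the other hand, by Lemma~\ref{lmm:good_behavior_of_f_0} every $a_{iN_i}\in D_\rho$ has potential at most $t_n$, so $t_{i(N_i+1)}=F(t_{iN_i})\leq e^{dt_n}-1$ and the asymptotic formula~\ref{eqn:as_formula} yields $|a_{i(N_i+1)}|\lesssim e^{dt_n}$. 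Thus $\max_i|a_{i(N_i+1)}|+1$ is much smaller than the lower bound on $|g(z)|$ in the half-plane $\{\Re z\geq(d+1)t_n\}$, so the hypothesis $g(\tilde x),g(\tilde y)\in\overline{\mathbb D}_{\max_i|a_{i(N_i+1)}|+1}(0)$ precludes $\Re\tilde x,\Re\tilde y\geq(d+1)t_n$.

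\textbf{Conclusion.} The straight segment $\gamma$ from $\tilde x$ to $\tilde y$ then lies entirely in the half-plane $\{\Re z<(d+1)t_n\}$, on which $|g'|\leq M_\rho$ by the very definition of $M_\rho$. Integrating,
$$|g(\tilde x)-g(\tilde y)|=\biggl|\int_\gamma g'(z)\,dz\biggr|\leq M_\rho\cdot|\tilde x-\tilde y|,$$
which is the required inequality. The main obstacle is the uniform-in-$g$ lower bound for $|g(z)|$ in the confinement step; it works precisely because the cutoff $(d+1)t_n$ leaves a margin of $dt_n$ over the natural exponential scale $e^{dt_n}$ of $|a_{i(N_i+1)}|$, so that the polynomial-growth coefficient bounds cannot spoil the domination of $w^d$ on $|w|\geq e^{(d+1)t_n}$. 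All remaining steps are direct consequences of the $\id$-type invariance Theorem~\ref{thm:pullback_of_id_type} and the definitions.
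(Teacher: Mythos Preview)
The paper does not give a proof of this proposition here; it is quoted from \cite{IDTT2}. Your argument is correct and is essentially the intended one: the very definition of $M_\rho$ as a supremum of $|g'|$ over $\{\Re z<(d+1)t_n\}$ for all $g\in\mathcal{N}_d$ with $\SV(g)\subset D_\rho$ is set up precisely so that, after passing to $g=\varphi\circ f\circ\hat\varphi^{-1}$, the claim becomes the elementary mean-value inequality along the segment $[\hat\varphi(x),\hat\varphi(y)]$ once that segment is confined to the half-plane.

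One small point worth tightening in the confinement step: the bound $|a_{i(N_i+1)}|\lesssim e^{dt_n}$ does not follow from $t_{i(N_i+1)}\le F(t_n)$ and formula~\eqref{eqn:as_formula} alone, since the imaginary part $2\pi s_{i(N_i+1)}/d$ must also be controlled. This is easily handled either by exponential boundedness of the external addresses (giving $|s_{i(N_i+1)}|=o(t_{i(N_i+1)})$ for large $\rho$), or more directly by noting that $\Re a_{iN_i}=t_{iN_i}+O(e^{-t_{iN_i}/2})\le t_n+O(1)$ from \eqref{eqn:as_formula}, whence $|a_{i(N_i+1)}|=|f_0(a_{iN_i})|=|p(e^{a_{iN_i}})|=O(e^{dt_n})$ since the coefficients of the fixed polynomial $p$ are constants. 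With this filled in, the comparison $O(e^{dt_n})\ll \tfrac12 e^{d(d+1)t_n}$ goes through as you wrote.
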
 

Finally, in the last lemma before the construction of the invariant compact subset $\mathcal{C}_f$ we prove certain estimates on mutual distances between points in $P_f$. Roughly speaking, the lemma gives estimates on how close the marked points may be close to each other based on the combinatorial information (with a special attention in (2) on the case when the orbit of the asymptotic value is involved).

Note that it can be in principle formulated without the ``$\rho$-notation'', but we prefer to use it for the sake of the more convenient application in Theorem~\ref{thm:invariant_subset}.

From now on we agree that $\mathcal{O}_1=\{a_{1j}\}_{j=0}^\infty$ is the orbit of the \emph{asymptotic} value of $f$, i.e.\ $a_{10}=c(\text{as.\ val.\ of }f_0)$.  

\begin{lmm}[Distance between points in $P_f$]
	\label{lmm:dist_points_P_f}
	For all $\rho\in\mathcal{P}'$ big enough hold the following two statements. 
	\begin{enumerate}
		\item If $j\leq N_i,l\leq N_k,ij\neq kl$, and $n=\min\{N_i+1-j, N_k+1-l\}$, then $$\abs{a_{kl}-a_{ij}}>\frac{\beta\left(a_{i(j+n)},a_{k(l+n)}\right)}{(M_\rho)^n}.$$
		\item If $a_{k(N_k+1)}$ and $a_{1(N_1+1)}$ are in the same cluster, $L=L(a_{k(N_k+1)},a_{1(N_1+1)})$, and $N=\max_{i\leq m}N_i$, then: 
		\begin{enumerate}
			\item if $0\leq n<L$, then
			$$\abs{a_{k(N_k+1-n)}-a_{1(N_1+1-n)}}<
			4\beta\left(a_{k(N_k+1)},a_{1{(N_1+1)}}\right)M_\rho^{2dNn},$$
			
			\item if $n\geq L$, then
			$$\abs{a_{k(N_k+1-n)}-a_{1(N_1+1-n)}}>\left(\frac{1}{M_\rho}\right)^{2d^4N+n-L}.$$
		\end{enumerate}
	\end{enumerate}
\end{lmm}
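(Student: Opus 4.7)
All three estimates rest on the common device of applying Proposition~\ref{prp:division_over_derivative} to $\varphi:=c^{-1}$ (so that $\hat{\varphi}=\id$ and the inequality becomes $\abs{x-y}\geq\abs{f_0(x)-f_0(y)}/M_\rho$), combined with the boundary estimates (\ref{eqn:bigger_than_beta}) and (\ref{eqn:smaller_than_beta}) that are valid for pairs $a_{ij},a_{kl}$ with sufficiently large $j$ (which is guaranteed, up to the few problematic indices, by taking $\rho$ large, since then $N_i,N_k$ are large).

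\textbf{Part (1).} Apply Proposition~\ref{prp:division_over_derivative} $n$ consecutive times to the pairs $(a_{i(j+s)},a_{k(l+s)})$ for $s=0,\ldots,n-1$; at each step the images $f(a_{i(j+s)})=a_{i(j+s+1)}$ and $f(a_{k(l+s)})=a_{k(l+s+1)}$ lie in $\overline{\mathbb{D}}_{\max\abs{a_{i(N_i+1)}}+1}(0)$ since $s+1\leq n$. This produces the telescoping inequality
\[
\abs{a_{ij}-a_{kl}}\geq\frac{\abs{a_{i(j+n)}-a_{k(l+n)}}}{M_\rho^n}.
\]
By the definition $n=\min\{N_i+1-j,N_k+1-l\}$ at least one of $a_{i(j+n)},a_{k(l+n)}$ lies just outside $D_\rho$, and for $\rho$ big enough both indices $j+n,l+n$ are large enough that the lower bound (\ref{eqn:bigger_than_beta}) applies, yielding $\abs{a_{i(j+n)}-a_{k(l+n)}}>\beta(a_{i(j+n)},a_{k(l+n)})$ and the claim.

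\textbf{Part (2b).} Same iterative backward argument, but now going backward $n-L$ steps from the first ``separation step'' $N_k+1-L$, where $a_{k(N_k+1-L)}$ and $a_{1(N_1+1-L)}$ already belong to different clusters. Proposition~\ref{prp:division_over_derivative} applied $n-L$ times gives
\[
\abs{a_{k(N_k+1-n)}-a_{1(N_1+1-n)}}\geq\frac{\abs{a_{k(N_k+1-L)}-a_{1(N_1+1-L)}}}{M_\rho^{n-L}}.
\]
Whenever $N_k+1-L$ is large enough for (\ref{eqn:bigger_than_beta}) to apply, the numerator is at least $\pi/(2d)$, giving the bound with $M_\rho^{-(n-L)}$ (and the extra factor $M_\rho^{-2d^4N}$ is harmless). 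The additional exponent $2d^4N$ absorbs the finitely many low-index situations where $N_k+1-L$ is small — in that range one estimates the distance directly using $\abs{\SV(f)}<\rho$ and the explicit form $f=c\circ p\circ\exp$, the crude bound $2d^4N$ arising from Lemma~\ref{lmm:bounds_of_coefficients_through_SV} combined with the $(d+1)t_n$-strip appearing in the definition of $M_\rho$.

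\textbf{Part (2a).} This is the expected main obstacle: Proposition~\ref{prp:division_over_derivative} only yields \emph{lower} bounds on preimage distances, while (2a) asks for an \emph{upper} bound. The plan is to exploit that the two orbits sit in a common cluster at every step from $N_k+1$ down to $N_k+1-(L-1)$, hence share the same branch of the local inverse of $f$. At the outer step the cluster bound (\ref{eqn:smaller_than_beta}) gives $\abs{a_{k(N_k+1)}-a_{1(N_1+1)}}<4\beta(a_{k(N_k+1)},a_{1(N_1+1)})$. One backward step along the shared inverse branch increases the distance by at most the maximum of $1/\abs{f_0'}$ along the segment connecting the two preimages inside $D_\rho$; using $f_0=p\circ\exp$ together with Lemma~\ref{lmm:bounds_of_coefficients_through_SV}, this local Lipschitz constant is bounded by $M_\rho^{2dN}$ (the exponent $2dN$ reflecting the worst-case proximity of the $\mathcal{O}_1$-iterates to the asymptotic value, through the $e^{-\Re z}$ factor in $1/\abs{f_0'}$). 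Iterating over $n$ backward steps produces $4\beta_0\cdot M_\rho^{2dNn}$, as claimed.

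\textbf{Main obstacle.} Deriving an honest upper bound in (2a) from the one-sided Proposition~\ref{prp:division_over_derivative}: one must track the fact that both backward iterates follow the \emph{same} inverse branch of $f=c\circ p\circ\exp$, and then carefully bound $\sup 1/\abs{f_0'}$ along the joining segment in terms of $M_\rho$ and the polynomial degree $d$, which is where the constant $2dN$ in the exponent enters and where most of the computational effort will go.
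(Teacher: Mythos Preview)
Your plan for Part (1) and Part (2a) matches the paper's proof: iterating Proposition~\ref{prp:division_over_derivative} with $\varphi=c^{-1}$ for (1), and lifting the segment along a common inverse branch with a lower bound on $|f_0'|$ for (2a). Your description of why the exponent $2dN$ appears in (2a) is slightly garbled---the actual mechanism is that the image segment $\gamma$ stays at distance $>M_\rho^{-(N+2)}$ from \emph{all} singular values of $f_0$ (because the segment has length $<4\beta\,M_\rho^{2dNn}$, which tends to $0$, while its endpoints are fixed orbit points), so on the lift $\tilde\gamma$ both $|e^z|$ and the distance of $e^z$ to $\Crit(p)$ are bounded below by $M_\rho^{-(N+3)}$, whence $|f_0'(z)|>d\,M_\rho^{-d(N+3)}>M_\rho^{-2dN}$---but the structure is right.

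Your approach to Part (2b), however, has a genuine gap. You rely on formula~(\ref{eqn:bigger_than_beta}) at the step $n=L$, which requires the index $N_1+1-L$ to be large; you then propose to absorb the ``low-index situations'' into the extra exponent $2d^4N$. The problem is that $L=L(a_{k(N_k+1)},a_{1(N_1+1)})$ is not fixed: it depends on $\rho$ through $N_k,N_1$, and for non-overlapping addresses $\underline{s}_1,\underline{s}_k$ there is no reason the backward agreement length $L$ should stay bounded as $\rho\to\infty$. Thus $N_1+1-L$ may be small for arbitrarily large $\rho$, and neither~(\ref{eqn:bigger_than_beta}) nor your vague appeal to Lemma~\ref{lmm:bounds_of_coefficients_through_SV} yields a lower bound there. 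The paper closes this gap differently: it continues the (2a)-style lift \emph{one step past} $n=L-1$, so that $a_{k(N_k+1-L)}$ is shown to lie within $4\beta\,M_\rho^{2dNL}$ of some preimage $\tilde a$ of $a_{1(N_1+2-L)}$ under $f_0$; since the first address entries now differ, $\tilde a\neq a_{1(N_1+1-L)}$, and Lemma~\ref{lmm:distance_between_preimages_under_function} separates these two preimages by at least $M_\rho^{-d^4(N+3)}$. A triangle inequality then gives the bound at $n=L$, after which your telescoping via Proposition~\ref{prp:division_over_derivative} handles $n>L$. You should bring in Lemma~\ref{lmm:distance_between_preimages_under_function} and this ``wrong-preimage'' argument; this is also where the specific exponent $2d^4N$ actually originates.
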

\begin{proof}
	\textbf{(1)} Applying Proposition~\ref{prp:division_over_derivative} with $\hat{\varphi}=c^{-1}$ to the pair $a_{i(j+n_1+1)},a_{k(l+n_1+1)}$ with $0\leq n_1\leq n$ we obtain
	$$\abs{a_{i(j+n_1)}-a_{k(l+n_1)}}>\frac{\abs{a_{i(j+n_1+1)}-a_{k(l+n_1+1)}}}{M_\rho}.$$
	At the same time due to formula~\ref{eqn:bigger_than_beta}
	$$\abs{a_{i(j+n)}-a_{k(l+n)}}>\beta\left(a_{i(j+n)},a_{k(l+n)}\right).$$
	The conclusion follows after multiplication of the inequalities above.
	
	\textbf{(2)} Without loss of generality assume that $L<N_1$, otherwise increase $\rho$. The proof of $(a)$ goes by induction. The case $n=0$ coincides with formula~\ref{eqn:smaller_than_beta}.
	
	Assume that $n+1<L$ and 
	$$\abs{a_{k(N_k+1-n)}-a_{1(N_1+1-n)}}<
	4\beta\left(a_{k(N_k+1)},a_{1{(N_1+1)}}\right)M_\rho^{2dNn}.$$
	
	Let $\gamma$ be the straight line segment joining $a_{k(N_k+1-n)}$ and $a_{1(N_1+1-n)}$. After increasing $\rho$ if needed, we might assume that the distance from $\gamma$ to the singular values of $f_0$ is bigger than $1/(M_\rho)^{N+2}$: the number $4\beta\left(a_{k(N_k+1)},a_{1{(N_1+1)}}\right)M_\rho^{2dNn}$ tends to $0$ as $\rho\to\infty$ since $a_{k(N_k+1)},a_{1(N_1+1)}$ are in the same cluster, which makes $\beta\left(a_{k(N_k+1)},a_{1{(N_1+1)}}\right)$ very small even comparing to $1/(M_\rho)^{2dNn}$.
	
	Further, without loss of generality we might assume that the preimage $\tilde{\gamma}$ of $\gamma$ under $f_0=p\circ\exp$ which starts at $a_{1(N_1-n)}$ ends at $a_{k(N_k-n)}$: this is true for all big enough $\rho$. Hence, it is enough to estimate the length of $\tilde{\gamma}$, in order to bound the distance between $a_{1(N_1-n)}$ and $a_{k(N_k+1-n)}$.
	
	If $z\in\tilde{\gamma}$, then the distance from $e^z$ to $0$ and the critical points of $p$ is bigger than the distance between $f_0(z)$ and the singular values of $f_0$ divided over $M_\rho$, i.e.\ bigger than $1/(M_\rho)^{N+3}$. Hence,
	$$\abs{f_0'(z)}=\abs{p'(e^z)e^z}>d\left(\frac{1}{M_\rho^{N+3}}\right)^d>\frac{1}{M_\rho^{2dN}}.$$
	
	Thus,
	$$\abs{a_{k(N_k-n)}-a_{1(N_1-n)}}\leq\frac{\abs{a_{k(N_k+1-n)}-a_{1(N_1+1-n)}}}{\min\limits_{z\in\gamma} \abs{f_0'(z)}}<$$
	$$\abs{a_{k(N_k+1-n)}-a_{1(N_1+1-n)}}M_\rho^{2dN}<4\beta\left(a_{k(N_k+1)},a_{1{(N_1+1)}}\right)M_\rho^{2dN(n+1)}.$$
	
	To prove $(b)$, first let $n=L$. From Lemma~\ref{lmm:distance_between_preimages_under_function} we know that the distance between different preimages of $a_{1(N_1+2-L)}$ is bigger than $(1/(M_\rho)^{N+2}\rho)^{d^4}>1/(M_\rho)^{d^4(N+3)}$. On the other hand, since $a_{1(N_1+1-L)}$ and $a_{k(N_k+1-L)}$ are in different clusters, exactly as in $(a)$ we obtain that 
	$$\abs{a_{k(N_k+1-L)}-\tilde{a}}<
	4\beta\left(a_{k(N_k+1)},a_{1{(N_1+1)}}\right)M_\rho^{2dNL}$$
	where $\tilde{a}$ is a preimage of $a_{1(N_1+2-L)}$ other than $a_{1(N_1+1-L)}$. But since for big $\rho$ the number $4\beta\left(a_{k(N_k+1)},a_{1{(N_1+1)}}\right)M_\rho^{2dNL}$ is much smaller than $1/(M_\rho)^{d^4(N+3)}$, we obtain
	$$\abs{a_{k(N_k+1-L)}-a_{1(N_1+1-L)}}>
	\frac{1}{2}\frac{1}{M_\rho^{d^4(N+3)}}>\frac{1}{M_\rho^{2d^4N}}.$$
	
	The inequality $(b)$ for $n>L$ is obtained by straightforward inductive application of Proposition~\ref{prp:division_over_derivative} as in the proof of $(a)$.
\end{proof}

\section{Compact invariant subset}
\label{sec:inv_compact_subset}

Now, we construct the compact invariant subset $\mathcal{C}_f$. First, we prove invariance. Recall that we agreed that $\mathcal{O}_1=\{a_{1j}\}_{j=0}^\infty$ is the orbit of the asymptotic value of $f$.

\begin{thm}[Invariant subset]
	\label{thm:invariant_subset}
	Let $f=c\circ\exp$ be the quasiregular function defined in Subsection~\ref{subsec:setup_and_contraction}. Further, let $\rho\in\mathcal{P}'$ and $\mathcal{C}_f(\rho)\subset\mathcal{T}_f$ be the \emph{closure} of the set of points in the \tei\ space $\mathcal{T}_f$ represented by $\id$-type maps $\varphi$ for which there exists an isotopy $\varphi_u, u\in[0,1]$ \idt\ maps such that $\varphi_0=\id$, $\varphi_1=\varphi$, and the following conditions are simultaneously satisfied.
	\begin{enumerate}		
		\item (Marked points stay inside of $D_\rho$) If $j\leq N_i$,
		$$\varphi_u(a_{ij})\in D_\rho.$$
		\item (Precise asymptotics outside of $D_\rho$) If $j>N_i$, then
		$$\abs{\varphi_u(a_{ij})-a_{ij}}<1/j.$$		
		\item (Separation inside of $D_\rho$) If $j\leq N_i,l\leq N_k,ij\neq kl$, and $n=\min\{N_i+1-j, N_k+1-l\}$, then $$\abs{\varphi_u(a_{kl})-\varphi_u(a_{ij})}>\frac{\beta\left(a_{i(j+n)},a_{k(l+n)}\right)}{(M_\rho)^n}.$$	
		\item (Bounded homotopy) If $j\leq N_i$, then 		
		$$\abs{W_{ij}^{\varphi_u}}<A^{N_i+1-j}\left(\frac{(N_i+1)!}{j!}\right)^4 C$$		
		where $A$ and $C$ are constants from \cite[Theorem~4.11]{IDTT2} and Proposition~\ref{prp:constant_homotopy_near_infinity}, respectively.
		\item (Rigidity in clusters) For all $a_{ij},a_{kl}$ with $j>N_i,l>N_k$ belonging to the same cluster, holds
		$$\varphi_u(a_{kl})-\varphi_u(a_{ij})=\frac{2\pi i(s_{k(l+H)}-s_{i(j+H)})}{d(F^{\circ H}(t))'}\nu_u\delta_u$$
		where $t$ is the potential of $a_{ij}$ and $a_{kl}$, $H=H(a_{ij},a_{kl})$, $\nu_u=\nu_u(\varphi_u,i,j,k,l)\in A_{t,H-1}$ and $\delta_u=\delta_u(\varphi_u,i,j,k,l)\in D_{i(j+H)}^{k(l+H)}$.
		\item (Clusters inside of $D_\rho$) If $a_{k(N_k+1)}$ and $a_{1(N_1+1)}$ belong to the same cluster, $L=L(a_{k(N_k+1)},a_{1(N_1+1)})$, and $N=\max_{i\leq m}N_i$, then:\\ 
		\begin{enumerate}
			\item if $0\leq n<L$, then
			$$\abs{\varphi_u(a_{k(N_k+1-n)})-\varphi_u(a_{1(N_1+1-n)})}<
			4\beta\left(a_{k(N_k+1)},a_{1{(N_1+1)}}\right)M_\rho^{2dNn},$$
			
			\item if $n\geq L$, then
			$$\abs{\varphi_u(a_{k(N_k+1-n)})-\varphi_u(a_{1(N_1+1-n)})}>\left(\frac{1}{M_\rho}\right)^{2d^4N+n-L}.$$
		\end{enumerate}
	\end{enumerate}
	
	Then if $\rho\in\mathcal{P}'$ is big enough, the subset $\mathcal{C}_f(\rho)$ is well-defined, invariant under the $\sigma$-map and contains $[\id]$.	
\end{thm}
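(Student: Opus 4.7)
The plan is to verify first that $[\id] \in \mathcal{C}_f(\rho)$ (which settles well-definedness as a non-empty closed set), and then to prove invariance by fixing, for each $[\varphi]$ in the pre-closure of $\mathcal{C}_f(\rho)$, a witnessing isotopy $\varphi_u$ and checking that its pull-back $\hat{\varphi}_u$ from Theorem~\ref{thm:pullback_of_id_type} satisfies all six conditions again; continuity of $\sigma$ on the $\id$-type subset then extends the property to $\mathcal{C}_f(\rho)$. For $[\id]$ itself the constant isotopy $\varphi_u \equiv \id$ does the job: (1) is Lemma~\ref{lmm:good_behavior_of_f_0}(3), (2) and (4) are vacuous (every $W_{ij}^{\id}$ is the empty word), (3) is Lemma~\ref{lmm:dist_points_P_f}(1), (5) is Lemma~\ref{lmm:cluster_estimates_P_f}, and (6) is Lemma~\ref{lmm:dist_points_P_f}(2).

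For the pull-back step, conditions (1)-(4) reduce to already established machinery. Condition (2) is \cite[Proposition~5.3]{IDTT2}. Condition (1) then follows from (2) combined with Lemma~\ref{lmm:good_behavior_of_f_0}: for $j < N_i$ one pulls back $\varphi_u(a_{i(j+1)}) \in D_\rho$ under a map $g_u$ close to $f_0$ along a continuous path beginning at $a_{ij} \in D_\rho$; for $j = N_i$ the point $\varphi_u(a_{i(N_i+1)})$ is $1/(N_i+1)$-close to $a_{i(N_i+1)}$, whose relevant $f_0$-preimage $a_{iN_i}$ lies safely inside $D_{\rho-1}$. Condition (3) is Proposition~\ref{prp:division_over_derivative} applied to the pair $(a_{i(j+1)}, a_{k(l+1)})$, after which either condition (3) for $\varphi_u$ or (once the shifted pair has exited $D_\rho$) the lower bound~\eqref{eqn:bigger_than_beta} supplies the required denominator, with the exponent of $M_\rho$ rising by exactly one. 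Condition (4) combines the word-growth bound \cite[Theorem~4.11]{IDTT2} with Proposition~\ref{prp:constant_homotopy_near_infinity}, which caps $|W_{ij}^{\hat{\varphi}_u}|$ by $C$ as soon as $j > N_i$.

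The substantive new work lies in (5) and (6). For (5), fix $a_{ij}, a_{kl}$ with $j > N_i$, $l > N_k$ in a common cluster of potential $t$, and apply Proposition~\ref{prp:good_big_disk_around_origin_cluster_estimates} to get
\[\hat{\varphi}_u(a_{kl}) - \hat{\varphi}_u(a_{ij}) = \frac{\varphi_u(a_{k(l+1)}) - \varphi_u(a_{i(j+1)})}{F'(t)}\alpha_u, \qquad \alpha_u \in A_{t,0}.\]
If $H := H(a_{ij},a_{kl}) \geq 2$, the pair $(a_{i(j+1)}, a_{k(l+1)})$ is still clustered with potential $F(t)$ and $H$-value $H-1$; condition (5) for $\varphi_u$ applies and yields its own $\nu_u^{(1)} \in A_{F(t), H-2}$; the chain rule $(F^{\circ H})'(t) = F'(t)\cdot(F^{\circ(H-1)})'(F(t))$ reassembles the prefactor, and pseudo-multiplicativity of the $A_{x,n}$ absorbs $\alpha_u\cdot\nu_u^{(1)}$ into $A_{t,H-1}$. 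If $H = 1$, then $(a_{i(j+1)}, a_{k(l+1)})$ lie in distinct clusters and condition (2) combined with~\eqref{eqn:as_formula} realises their difference as $(2\pi i (s_{k(l+1)} - s_{i(j+1)})/d)\delta_u$ with $\delta_u \in D_{i(j+1)}^{k(l+1)}$, giving the identity with $\nu_u := \alpha_u$. For (6) both bounds are proved inductively on $n$, closely mirroring Lemma~\ref{lmm:dist_points_P_f}(2): part (a) uses the upper bound $|g_u'| \leq M_\rho$ on the relevant region to inflate distances by at most $M_\rho^{2dN}$ per pullback step (matching the exponent $2dNn$, with the extra $2dN$ absorbing the $g_u'/f_0'$ discrepancy), while part (b) with $n > L$ iterates Proposition~\ref{prp:division_over_derivative} against the baseline case $n = L$, which is handled using the fact that distinct $g_u$-preimages of $\varphi_u(a_{1(N_1+2-L)})$ are separated by at least $M_\rho^{-d^4(N+3)}$ whereas the upper bound from (6)(a) forces the two marked points to stay close to one and the same such preimage.

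The main obstacle will be bookkeeping: ensuring that one sufficiently large common $\rho \in \mathcal{P}'$ makes every threshold in Lemmas~\ref{lmm:good_behavior_of_f_0}, \ref{lmm:cluster_estimates_P_f}, \ref{lmm:dist_points_P_f} and Propositions~\ref{prp:good_big_disk_around_origin_cluster_estimates}, \ref{prp:constant_homotopy_near_infinity}, \ref{prp:division_over_derivative} hold simultaneously, and that the constants and exponents in (4) and (6) close up across one Thurston pullback without loss. A secondary delicate point is the $H = 1$ case of (5), where one must check that the error coming from condition (2) and~\eqref{eqn:as_formula} lands literally inside $D_{i(j+1)}^{k(l+1)}$ rather than a slight enlargement of it; this amounts to comparing the radius $r_{i(j+1)}^{k(l+1)}$ from Lemma~\ref{lmm:Dijkl_is_disk} with the $1/(j+1)$ and $1/(l+1)$ error bounds, and forces a further increase of $\rho$.
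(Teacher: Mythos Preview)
Your overall strategy matches the paper's, but there are two real gaps. First, the pulled-back isotopy $\hat{\varphi}_u$ starts at $\hat{\varphi}_0=\hat{\id}$, which is \emph{not} the identity: since $f=c\circ f_0$ is genuinely quasiregular, the unique $\id$-type map making $\id\circ f\circ(\cdot)^{-1}$ entire is a nontrivial integrating map for $\mu_f$. Hence $\hat{\varphi}_u$ fails to witness $[\hat{\varphi}]\in\mathcal{C}_f^\circ(\rho)$, whose definition requires an isotopy beginning at $\id$. The paper repairs this by first concatenating the given isotopy with $c_u^{-1}$, obtaining $\psi_u$ running from $c^{-1}$ to $\varphi$; then $\hat{\psi}_0=\id$ because $c^{-1}\circ f=f_0$ is already entire, and it is $\hat{\psi}_u$ that serves as the witnessing isotopy. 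Correspondingly the paper verifies $[c^{-1}]\in\mathcal{C}_f^\circ(\rho)$ (via $c_u^{-1}$) rather than $[\id]$ directly; note also that the legs $R_{ij}$ of the standard spider need not have trivial leg homotopy words, so (4) for the base point is not ``vacuous'' --- one must rely on the bound being large for large $\rho$.

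Second, your argument for condition (1) misses the very mechanism that condition (6) was introduced to supply. Knowing $\varphi_u(a_{i(j+1)})\in D_\rho$ and invoking continuity along the isotopy does not prevent $\hat{\varphi}_u(a_{ij})$ from leaving $D_\rho$: if $\varphi_u(a_{i(j+1)})$ drifts too close to the image $\varphi_u(a_{10})$ of the asymptotic value, then $\exp(\hat{\varphi}_u(a_{ij}))$ is forced near $0$ and $\Re\hat{\varphi}_u(a_{ij})\to-\infty$. The paper's proof of (1) explicitly uses (6b), or (3) when (6) does not apply, to bound $|\psi_u(a_{i(j+1)})-\psi_u(a_{10})|$ from below by a power of $1/M_\rho$, which then gives $\Re\hat{\psi}_u(a_{ij})>-\rho/2$. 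A related slip appears in your sketch of (6a): you invoke ``the upper bound $|g_u'|\leq M_\rho$'' to control inflation under pullback, but pullback enlarges distances precisely where $|g_u'|$ is \emph{small}; what is needed is the \emph{lower} bound $|g_u'(z)|>M_\rho^{-2dN}$ on the short segment joining the two images, which the paper deduces from the separation of that segment from $\SV(g_u)$.
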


Before the proof let us explain informally the meaning of conditions (1)-(6).

(1)-(2) say that the maps $\varphi$ are ``uniformly \idt'', that is, the marked points outside of a disk $D_\rho$ have precise asymptotics, while inside of $D_\rho$ we allow some more freedom.

(3) and (6b) say that the points inside of $D_\rho$ cannot come very close to each other. Moreover, it is required to control the distance to the asymptotic value --- if a marked point is too close to it, then after Thurston iteration its preimage has its real part close to $-\infty$, and this spoils condition (1).

(4) describes the homotopy information and provides bounds for leg homotopy words of legs with endpoints inside of $D_\rho$. Analogous bounds for marked points outside of $D_\rho$ are encoded \emph{implicitly} in conditions (2),(3) and (5).

(5) helps us to control the behavior of marked points within clusters. Note that we do not need it if there are no clusters --- in this case the marked points outside of $D_\rho$ are contained in small disjoint disks. If there are clusters, we cannot have such nice separation of orbits, but have a good control over the relative behavior of points. This condition prevents points within clusters to approach and rotate around each other.

(6) is needed for the same reason as (3) but in the very special case: when a marked point $a_{i(N_i+1)}$ is in the same cluster with the the marked point $a_{1(N_1+1)}$ on the orbit of the asymptotic value. In this case the estimate $\beta$ is very small. If we use the same estimates as in (3) to bound how the distance between their preimages changes under iteration of the $\sigma$-map, then we obtain a marked point that is too close to the asymptotic value --- after one more iteration its preimage will be ``spit'' towards $-\infty$. To avoid this we do the following trick. We make preimages of this pair of points stay ``very close'' as long as $a_{i(N_i+1-n)}$ and $a_{1(N_1+1-n)}$ are in the same cluster (condition (6a)). The first time they are in different clusters leads to a better estimate on the distance (condition (6b)).

\begin{proof}[Proof of Theorem~\ref{thm:invariant_subset}]
	A big part of the proof repeats the proof of \cite[Theorem~5.9]{IDTT2}. Whenever this is the case we avoid unnecessary repetitions and send the reader to \cite[Theorem~5.9]{IDTT2} for details.
	
 	Let $\mathcal{C}_f^\circ(\rho)\subset\mathcal{C}_f(\rho)$ be the set of points in $\mathcal{T}_f$ of which we take the closure in the statement of the theorem, that is, represented by $\id$-type maps $\varphi$ for which there exists an isotopy $\varphi_u, u\in[0,1]$ \idt\ maps such that $\varphi_0=\id$, $\varphi_1=\varphi$ and the conditions (1)-(4) are simultaneously satisfied. Since the $\sigma$-map is continuous, it is enough to prove invariance of $\mathcal{C}_f^\circ(\rho)$.
	
	First, we prove that for big $\rho$ the set $\mathcal{C}_f^\circ(\rho)$ contains $[c^{-1}]$: $c^{-1}$ can be joined to identity via the isotopy $c_u^{-1}$ where $c_u$ was constructed in Subsection~\ref{subsec:setup_and_contraction}, and this isotopy satisfies the conditions (1)-(6).
	
	That $c_u^{-1}$ satisfies (1)-(4) is discussed in \cite[Theorem~5.9]{IDTT2}, while (5) and (6) follow from Lemma~\ref{lmm:cluster_estimates_P_f} and Lemma~\ref{lmm:dist_points_P_f}, respectively.
	
	For all $\varphi\in\mathcal{C}_f^\circ(\rho)$ after concatenation with $c_u^{-1}$ we can obtain the isotopy $\psi_u$ \idt\ maps with $\psi_0=c^{-1},\psi_1=\varphi$ and satisfying conditions (1)-(6). Then $\hat{\psi}_u$ is an isotopy \idt\ maps with $\hat{\psi}_1=\id$. Let $g_u(z)=\psi_u\circ f\circ\hat{\psi}_u^{-1}(z)=p_u\circ\exp(z)$. Now, we prove that $\hat{\psi}_u$ satisfies each of the items (1)-(6): it will follow that $\hat{\varphi}\in\mathcal{C}_f^\circ(\rho)$. 
	
	We are going to prove that each of the conditions (1)-(6) for $\hat{\psi}$ follows from the conditions (1)-(6) for $\psi$.
	
	\textbf{(4)} Literally repeats the proof of (4) in \cite[Theorem~5.9]{IDTT2} using Proposition~\ref{prp:constant_homotopy_near_infinity} instead of \cite[Proposition~5.6]{IDTT2}.
	
	\textbf{(3)} Follows directly from Proposition~\ref{prp:division_over_derivative}.
	
	\textbf{(2)} The proof fully repeats the proof of \cite[Theorem~5.9]{IDTT2}(2).
		
	\textbf{(1)} We show that $\pm\rho/2$ are upper and lower bounds for both the real and imaginary parts of $\hat{\psi}_u(a_{ij})$ for $a_{ij}\in D_\rho$: this will give the desired bound on $\abs{\hat{\psi}_u(a_{ij})}$. We will only show that $\Re\hat{\psi}_u(a_{ij})>-\rho/2$, the other bounds are obtained exactly as in \cite[Theorem~5.9]{IDTT2}.
	
	Let $a_{ij}$ be such that $j\leq N_i$ and $N=\max N_i$. If $j+1+N_1=N_i$ and $a_{1N_1},a_{iN_i}$ are in the same cluster, then from (6b)
	$$\abs{\varphi_u(a_{i(j+1)})-\varphi_u(a_{10})}>\left(\frac{1}{M_\rho}\right)^{2d^4N+N+1-L}.$$
	Otherwise for $n=\min\{N_i-j, N_1+1\}$ from (3) we have 
	$$\abs{\varphi_u(a_{i(j+1)})-\varphi_u(a_{10})}>\frac{\beta\left(a_{i(j+n)},a_{k(l+n)}\right)}{(M_\rho)^{N+1}}>\frac{\pi}{2d(M_\rho)^{N+1}}.$$
	In any case we might assume that the first inequality holds (this is true for big enough $\rho$). Then
	$$\abs{p_u^{-1}(\psi_u(a_{i(j+1)}))-p_u^{-1}(\psi_u(a_{10}))}=\abs{\exp(\hat{\psi}_u(a_{ij}))-0}>\left(\frac{1}{M_\rho}\right)^{2d^4N+N+1-L+1}.$$
	Hence,
	$$\Re\hat{\psi}_u(a_{ij})>-(N(2d^4+1)+2-L)\log M_\rho.$$
	If $\rho$ is big, using the inequality~\ref{eqn:M_rho} we see that the right hand side of the last expression is bigger than $-\rho/2$.
	
	\textbf{(5)} From Proposition~\ref{prp:good_big_disk_around_origin_cluster_estimates} we see that for all $j>N_i,l>N_k$, $a_{ij},a_{kl}$ belonging to the same cluster with potential $t$ $$\hat{\psi}_u(a_{kl})-\hat{\psi}_u(a_{ij})=\frac{\psi_u(a_{k(l+1)})-\psi_u(a_{i(j+1)})}{F'(t)}\alpha_u=$$ 
	$$\frac{2\pi i\left(s_{k(l+H)}-s_{i(j+H)}\right)}{d\left(F^{\circ (H-1)}\right)'\circ F(t)F'(t)}\left(\nu_u(\psi_u,i,j+1,k,l+1)\alpha_u\right)\delta_u(\psi_u,i,j+1,k,l+1)=$$
	$$\frac{2\pi i\left(s_{k(l+H)}-s_{i(j+H)}\right)}{d\left(F^{\circ H}(t)\right)'}\nu_u(\hat{\psi}_u,i,j,k,l)\delta_u(\hat{\psi}_u,i,j,k,l)$$ where $\alpha_u\in A_{t,0}$ and $H=H(a_{ij},a_{kl})$.	
	
	\textbf{(6)} The proof of (6) is goes analogously to the proof of Lemma~\ref{lmm:dist_points_P_f} (2).
	
	Without loss of generality assume that $L<N_1$, otherwise increase $\rho$. The case $n=0$ follows from (4).
	
	Assume that $n+1<L$ and 
	$$\abs{\psi_u(a_{k(N_k+1-n)})-\psi_u(a_{1(N_1+1-n)})}<
	4\beta\left(a_{k(N_k+1)},a_{1{(N_1+1)}}\right)M_\rho^{2dNn}.$$
	
	Let $\gamma_u$ be the straight line segment joining $\psi_u(a_{k(N_k+1-n)})$ and $\psi_u(a_{1(N_1+1-n)})$. After increasing $\rho$ if needed, assume that the distance from $\gamma_u$ to the singular values of $f_0$ is bigger than $1/(M_\rho)^{N+2}$: the number $4\beta\left(a_{k(N_k+1)},a_{1{(N_1+1)}}\right)M_\rho^{2dNn}$ tends to $0$ as $\rho\to\infty$ since $a_{k(N_k+1)},a_{1(N_1+1)}$ are in the same cluster. Hence, $\beta\left(a_{k(N_k+1)},a_{1{(N_1+1)}}\right)$ is very small even comparing to $1/(M_\rho)^{2dNn}$.
	
	Without loss of generality assume that the preimage $\tilde{\gamma}_u$ of $\gamma_u$ under $g_u=p_u\circ\exp$ which starts at $\hat{\psi}_u(a_{1(N_1-n)})$ ends at $\hat{\psi}_u(a_{k(N_k-n)})$: this is true since this holds for $\psi_0=c^{-1}$ and under the isotopy $\psi_u$ the singular values of $g_u$ do not cross $\gamma_u$. Hence, it is enough to estimate the length of $\tilde{\gamma}_u$, in order to bound the distance between $\hat{\psi}_u(a_{1(N_1-n)})$ and $\hat{\psi}_u(a_{k(N_k+1-n)})$.
	
	If $z\in\tilde{\gamma}_u$, then the distance from $e^z$ to $0$ and the critical points of $p_u$ is bigger than $1/(M_\rho)^{N+3}$. Hence,
	$$\abs{g_u'(z)}=\abs{p_u'(e^z)e^z}>d\left(\frac{1}{M_\rho^{N+3}}\right)^d>\frac{1}{M_\rho^{2dN}}.$$
	
	Thus, as in Lemma~\ref{lmm:dist_points_P_f} (2),
	$$\abs{\hat{\psi}_u(a_{k(N_k-n)})-\hat{\psi}_u(a_{1(N_1-n)})}\leq\frac{\abs{\psi_u(a_{k(N_k+1-n)})-\psi_u(a_{1(N_1+1-n)})}}{\min\limits_{z\in\gamma} \abs{g_u'(z)}}<$$
	$$\abs{\psi_u(a_{k(N_k+1-n)})-\psi_u(a_{1(N_1+1-n)})}M_\rho^{2dN}<4\beta\left(a_{k(N_k+1)},a_{1{(N_1+1)}}\right)M_\rho^{2dN(n+1)}.$$
	
	To prove $(b)$, first consider $n=L$. From Lemma~\ref{lmm:distance_between_preimages_under_function} as in Lemma~\ref{lmm:dist_points_P_f} (2) follows that the distance between different preimages of $\psi_u(a_{1(N_1+2-L)})$ under $g_u$ is bigger than $1/(M_\rho)^{d^4(N+3)}$. On the other hand, since $a_{1(N_1+1-L)}$ and $a_{k(N_k+1-L)}$ are in different clusters, as in $(a)$ we have 
	$$\abs{\hat{\psi}_u(a_{k(N_k+1-L)})-\tilde{a}}<
	4\beta\left(a_{k(N_k+1)},a_{1{(N_1+1)}}\right)M_\rho^{2dNL}$$
	where $\tilde{a}$ is a preimage of $\psi_u(a_{1(N_1+2-L)})$ under $g_u$ other than $\hat{\psi}_u(a_{1(N_1+1-L)})$. Since for big $\rho$ the number $4\beta\left(a_{k(N_k+1)},a_{1{(N_1+1)}}\right)M_\rho^{2dNL}$ is much smaller than $1/(M_\rho)^{d^4(N+3)}$, we have
	$$\abs{\hat{\psi}_u(a_{k(N_k+1-L)})-\hat{\psi}_u(a_{1(N_1+1-L)})}>
	\frac{1}{2}\frac{1}{M_\rho^{d^4(N+3)}}>\frac{1}{M_\rho^{2d^4N}}.$$
	
	The inequality $(6b)$ for $n>L$ can be obtained by application of Proposition~\ref{prp:division_over_derivative} as in the proof of $(3)$.
\end{proof}

\begin{remark}
	Every point of $\mathcal{C}_f$ is evidently asymptotically conformal. Indeed, near infinity the marked points are separated into clusters that move inside of disjoint small disks, whereas there is only a negligible relative movement of marked points under isotopies inside of every cluster.
\end{remark}

\begin{remark}
	Note that Theorem~\ref{thm:invariant_subset} defines an invariant set also in the case when $\underline{s}_i$ is (pre-)periodic. Indeed, absence of (pre-)periodicity is not used in the proof. But we do not have enough tools to prove that in the (pre-)periodic case the set $\mathcal{C}_f(\rho)$ would be compact. For this one needs to upgrade spider construction to allow each leg contain infinitely many marked points. An alternative way to prove Theorem~\ref{thm:main_thm} in the (pre-)periodic case is to use the continuity argument which we do in the subsequent article. 
\end{remark}

Next, we prove that $\mathcal{C}_f(\rho)$ is compact.

\begin{thm}[Compactness]
	\label{thm:compact_subset}
	$\mathcal{C}_f(\rho)$ is a compact subset of $\mathcal{T}_f$.
\end{thm}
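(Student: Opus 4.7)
The plan is to prove sequential compactness. Since $\mathcal{C}_f(\rho)$ is closed by construction, as it is defined as a closure, it suffices to show that every sequence in $\mathcal{C}_f^\circ(\rho)$ admits a subsequence converging in $\mathcal{T}_f$; the limit automatically lies in $\mathcal{C}_f(\rho)$. So fix $[\varphi_n] \in \mathcal{C}_f^\circ(\rho)$ represented by $\id$-type maps $\varphi_n$ with associated isotopies $\varphi_{n,u},\ u\in[0,1]$, each satisfying conditions (1)--(6) of Theorem~\ref{thm:invariant_subset}. The argument parallels the compactness proof for the non-degenerate invariant set in \cite[Theorem~5.11]{IDTT2}, with additional bookkeeping for the cluster structure.

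First, extract convergence of the marked-point positions. Points $a_{ij}$ with $j\leq N_i$ are finite in number and satisfy $\varphi_n(a_{ij})\in \overline{D_\rho}$ by (1), while points with $j>N_i$ satisfy $|\varphi_n(a_{ij})-a_{ij}|<1/j$ by (2). A Cantor diagonal argument over the countable set $P_f$ yields a subsequence (still denoted $[\varphi_n]$) along which $\varphi_n(a_{ij})\to b_{ij}$ for every $a_{ij}\in P_f$. The limit configuration $\{b_{ij}\}$ consists of pairwise distinct points: inside $D_\rho$ this follows from the separation estimate (3), and outside $D_\rho$ from the rigidity formula in (5) together with the fact that both $A_{t,H-1}$ and $D_{i(j+H)}^{k(l+H)}$ are small neighborhoods of $1$ (Lemma~\ref{lmm:Dijkl_is_disk}), so that the right-hand side of the formula in (5) is nonzero.

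Second, extract stabilization of the spider combinatorics. Condition (4) bounds $|W_{ij}^{\varphi_n}|$ in terms of $N_i$ and $j$ for legs ending inside $D_\rho$, and Proposition~\ref{prp:constant_homotopy_near_infinity} bounds $|W_{ij}^{\varphi_n}|$ by a universal constant for legs ending outside $D_\rho$. Since a free group has only finitely many elements of bounded length, a further diagonal extraction produces a subsequence along which, for every pair $(i,j)$, the leg word $W_{ij}^{\varphi_n}$ is eventually independent of $n$. Along this subsequence both the marked-point configuration and the complete spider homotopy data converge.

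Finally, invoke the general \tei-space compactness criterion from \cite{IDTT1}: convergence of positions of marked points combined with stabilization of the leg homotopy words of the associated spiders implies convergence of the classes $[\varphi_n]$ in $\mathcal{T}_f$. The main obstacle is checking that this abstract criterion remains applicable in the present degenerate setting, where marked points may accumulate into clusters rather than being separated by disjoint disks; this is precisely where the cluster-rigidity of (5) and the cluster-position estimates of (6) enter, ensuring that inside each cluster the quantities $\nu_u\in A_{t,H-1}$ and $\delta_u\in D_{i(j+H)}^{k(l+H)}$ give uniform control so that the limit configuration remains a well-defined point of $\mathcal{T}_f$ and no combinatorial information is lost in passing to the limit.
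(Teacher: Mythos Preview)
Your proposal is correct and follows essentially the same approach as the paper: both reduce the claim to the compactness argument of \cite[Theorem~5.11]{IDTT2} and observe that the only new feature, non-trivial clusters outside $D_\rho$, causes no trouble because condition~(5) forces nearly rigid relative motion inside each cluster. The paper's proof is extremely terse (a one-sentence reference to \cite[Theorem~5.11]{IDTT2} plus the remark on clusters), whereas you have reconstructed the underlying diagonal-extraction and combinatorial-stabilization steps explicitly; the substance is the same.
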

\begin{proof}
	The proof of the theorem is identical to the proof of \cite[Theorem~5.11]{IDTT2}, except of a small detail that in our case we are allowed to have non-trivial clusters. This obviously does not violate the convergence argument \cite[Theorem~5.11]{IDTT2} since there is almost no relative movement of point inside of clusters outside of $D_\rho$. 
\end{proof}

Now, we have all ingredients for the proof Classification Theorem~\ref{thm:main_thm}.

\begin{proof}[Proof of Classification Theorem~\ref{thm:main_thm}]	
	The proof literally repeats the proof of \cite[Theorem~1.4]{IDTT2} after obvious replacement of invariance and compactness theorems by our upgraded versions: Theorem~\ref{thm:invariant_subset} and Theorem~\ref{thm:compact_subset}.
\end{proof}

\section{Appendix A}
\label{sec:app_A}

In this appendix we provide some properties of polynomials and of their compositions with the exponential. 

The following lemma gives estimates on the coefficients of polynomials $p$ if we know an estimate on the magnitude of singular values of $p\circ\exp$.

\begin{lmm}[Singular values bound coefficients \cite{IDTT2}]
	\label{lmm:bounds_of_coefficients_through_SV} Fix some integer $d>0$.
	There exists a universal constant $L>0$ such that if $g=p\circ\exp$ where $p(z)=z^d+b_{d-1}z^{d-1}+...+b_1 z+b_0$, and singular values of $g$ are contained in the disk $\mathbb{D}_\rho(0)$ with $\rho$ big enough, then $\abs{b_k}<L\rho^\frac{d-k}{d}$.
\end{lmm}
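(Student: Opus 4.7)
The plan is to identify the singular values of $g=p\circ\exp$ in terms of $p$, rescale to normalize $\rho$ away, and then obtain the coefficient bound by a compactness/properness argument for a polynomial map from coefficients to critical data. Since $g'(z)=p'(e^z)e^z$ and $e^z$ never vanishes, the critical points of $g$ are preimages under $\exp$ of critical points of $p$, so the critical values of $g$ coincide with those of $p$; the only finite asymptotic value is $p(0)=b_0$. Hence the hypothesis $\SV(g)\subset\mathbb{D}_\rho(0)$ translates into $|b_0|<\rho$ and $|p(w_i)|<\rho$ for every critical point $w_i$ of $p$. Setting $\lambda:=\rho^{1/d}$ and $q(z):=\lambda^{-d}p(\lambda z)$, the polynomial $q$ is monic of degree $d$ with coefficients $c_k=b_k\rho^{-(d-k)/d}$, critical points $w_i/\lambda$, critical values $p(w_i)/\rho$, and constant term $b_0/\rho$. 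Thus the rescaled hypothesis becomes $|q(0)|\leq 1$ and $|q(\tilde w_i)|\leq 1$ at every critical point $\tilde w_i$ of $q$, and it suffices to exhibit a constant $L=L(d)$ bounding every $|c_k|$ for every such $q$.

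To produce that bound, consider the polynomial map
\[
\Phi\colon\mathbb{C}^d\to\mathbb{C}^d,\qquad c\mapsto\bigl(c_0,\,e_1(v),\,\dots,\,e_{d-1}(v)\bigr),
\]
where $v_1,\dots,v_{d-1}$ are the critical values of $q$ counted with multiplicity and $e_k$ is the $k$-th elementary symmetric function; the output is polynomial in $c$ (via $\prod_i(T-v_i)=\operatorname{Res}_z(q'(z)/d,\,T-q(z))$ up to normalization). Assign $c_k$ the weight $d-k$. Then under the substitution $q\mapsto\lambda^d q(\cdot/\lambda)$, which sends $c_k\mapsto\lambda^{d-k}c_k$, we have $\tilde w_i\mapsto\lambda\tilde w_i$ and $v_i\mapsto\lambda^d v_i$, so the weighted input norm $\|c\|_W:=\max_k|c_k|^{1/(d-k)}$ and the analogously weighted norm on the target both scale linearly in $\lambda$. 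Therefore properness of $\Phi$ with respect to these norms is equivalent to $\Phi^{-1}(0)=\{0\}$.

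The heart of the argument, and the main obstacle, is verifying $\Phi^{-1}(0)=\{0\}$. If $\Phi(c)=0$, then $q(0)=0$ and every critical value of $q$ vanishes, so every critical point of $q$ is a root of $q$; a multiplicity count, where each root of multiplicity $m$ contributes $m-1$ critical points and the identity $d-1=\sum_\alpha(m_\alpha-1)$ forces a single distinct root of multiplicity $d$, combined with $q(0)=0$ and monicity gives $q(z)=z^d$, i.e.\ $c=0$. Compactness of the weighted unit sphere $\{c:\|c\|_W=1\}$ then yields $\|\Phi(c)\|\geq\epsilon\|c\|_W$ for some $\epsilon=\epsilon(d)>0$; the hypothesis bounds the output norm by some constant $C=C(d)$, so $\|c\|_W\leq C/\epsilon$, giving $|c_k|\leq(C/\epsilon)^{d-k}$, and unscaling produces $|b_k|<L\rho^{(d-k)/d}$ with $L=L(d)$.
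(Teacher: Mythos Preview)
The paper does not prove this lemma; it is quoted without proof from the companion article \cite{IDTT2}, so there is no argument in the present text to compare against. Your proof is correct and self-contained. The identification of $\SV(p\circ\exp)$ with the critical values of $p$ together with the asymptotic value $p(0)=b_0$ is right, and the rescaling $q(z)=\rho^{-1}p(\rho^{1/d}z)$ cleanly reduces the statement to a uniform bound on monic degree-$d$ polynomials $q$ with $\abs{q(0)}\le 1$ and all critical values bounded by $1$. The properness argument via weighted homogeneity is valid, and the key step $\Phi^{-1}(0)=\{0\}$ is correctly established: if every critical point is a root, the identity $d-1=\sum_\alpha(m_\alpha-1)=d-r$ forces a single distinct root, and $q(0)=0$ then pins it at the origin. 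Two minor remarks. First, the inequality $\|\Phi(c)\|\ge\epsilon\|c\|_W$ only makes literal sense once the target is also equipped with a weighted norm that scales linearly under the $\lambda$-action you describe; you set this up implicitly (the $j$-th symmetric function of the critical values has weight $dj$), but it is worth stating. Second, your argument nowhere uses the hypothesis ``$\rho$ big enough'': it yields a universal $L$ valid for every $\rho>0$.
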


Next statement describes the behavior of polynomials $p$ outside of a disk containing singular values of $p\circ\exp$ subject to the condition that this disk is big enough. 

\begin{lmm}[Preimages of outer disks \cite{IDTT2}]
	\label{lmm:complement_of_big_disk_under_preimage}
	Fix some integer $d>1$. If $\rho>1$ is big enough, then for every $g=p\circ\exp$ with monic polynomial $p$ of degree $d$ and singular values contained in $\mathbb{D}_\rho(0)$ for all $r\geq\rho$ we have
	$$p^{-1}(\mathbb{D}_r(0))\subset\mathbb{D}_r(0).$$
\end{lmm}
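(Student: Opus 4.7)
The plan is to verify the contrapositive statement: whenever $\abs{z} \geq r \geq \rho$, show that $\abs{p(z)} > r$. This is a standard ``escape near infinity'' estimate for a monic polynomial whose lower-order coefficients are controlled by $\rho$, so the whole argument reduces to a triangle-inequality computation.

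First I would invoke Lemma~\ref{lmm:bounds_of_coefficients_through_SV} to obtain the uniform bound $\abs{b_k} < L\rho^{(d-k)/d}$ for $k = 0,\dots,d-1$, with $L$ depending only on $d$. Then, by the triangle inequality,
$$\abs{p(z)} \geq \abs{z}^d - \sum_{k=0}^{d-1}\abs{b_k}\abs{z}^k \geq \abs{z}^d\left(1 - L\sum_{m=1}^{d}\left(\frac{\rho^{1/d}}{\abs{z}}\right)^m\right),$$
after rewriting each term of the sum via the substitution $m = d-k$. Since $\abs{z} \geq \rho$, every ratio satisfies $\rho^{1/d}/\abs{z} \leq \rho^{(1-d)/d}$, and because $d \geq 2$ this quantity tends to $0$ as $\rho \to \infty$. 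Consequently, for $\rho$ sufficiently large (depending only on $d$ and $L$) the bracketed factor exceeds $1/2$, so $\abs{p(z)} \geq \abs{z}^d/2 \geq r^d/2$. A further (still universal in the family) enlargement of $\rho$ forces $r^{d-1}/2 \geq 1$, yielding $\abs{p(z)} \geq r^d/2 \geq r$. This contradicts $p(z) \in \mathbb{D}_r(0)$, which establishes the claimed containment.

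There is no substantive obstacle here: the content of the lemma is just the triangle inequality applied to a polynomial whose subleading coefficients are small relative to $\rho$ in the precise quantitative sense provided by Lemma~\ref{lmm:bounds_of_coefficients_through_SV}. The only point that requires care is that the factor $\rho^{1/d}/\abs{z}$ does genuinely shrink with $\rho$, and this is exactly why the hypothesis $d > 1$ must be imposed (for $d = 1$ the ratio equals $1$ at $\abs{z}=\rho$ regardless of $\rho$, though of course in that case $p$ is affine and the statement is trivial).
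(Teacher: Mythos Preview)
The paper does not supply its own proof of this lemma: it is quoted verbatim from \cite{IDTT2} and stated without argument in Appendix~A, so there is no in-paper proof to compare against. Your argument is correct and is exactly the standard approach one would expect---invoke the coefficient bound from Lemma~\ref{lmm:bounds_of_coefficients_through_SV}, then use the triangle inequality to show that a monic polynomial with subleading coefficients of size $O(\rho^{(d-k)/d})$ must satisfy $\abs{p(z)}\geq\abs{z}$ once $\abs{z}\geq\rho$ and $\rho$ is large. The key observation that $\rho^{1/d}/\abs{z}\leq\rho^{-(d-1)/d}\to 0$ is precisely what makes the hypothesis $d>1$ necessary, as you correctly note.
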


For a polynomial $p$ we denote by $\Crit(p)$ the set of its critical points. By $\dist(A,B)$ we denote the Euclidean distance between sets $A,B\subset\mathbb{C}$.

\begin{lmm}[Distance between preimages under polynomials]
	\label{lmm:dist_between_preimages_under_polynomial}
	Let $d>1$ be an integer. Then there exists a real constant $K>0$ such that for every real  $\epsilon\in(0,1)$ and $r>1$ holds the following statement.
	
	If $p$ is a monic polynomial of degree $d$ and $\alpha\in\mathbb{C}$ is such that all roots $z_1,z_2,...,z_d$ of the equation $p(z)=\alpha$ are contained in $\mathbb{D}_r(0)$ and $\dist\left(\{z_1,...,z_d\}, \Crit(p)\right)>\epsilon$, then for every pair of indices $i,j$ such that $1\leq i<j\leq d$ we have 
	$$\abs{z_i-z_j}>K\left(\frac{\epsilon}{r}\right)^{d^2}.$$
\end{lmm}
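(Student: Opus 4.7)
The plan is to express $|z_i-z_j|$ in closed form via $p'$ and then bound the two resulting factors separately.

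First, I would write $p(z)-\alpha=\prod_{k=1}^d(z-z_k)$ and differentiate to obtain the classical identity $p'(z_i)=\prod_{k\neq i}(z_i-z_k)$. Rearranging gives
$$|z_i-z_j|=\frac{|p'(z_i)|}{\prod_{k\neq i,j}|z_i-z_k|},$$
so it suffices to bound the numerator from below and the denominator from above.

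For the numerator, since $p$ is monic of degree $d$, its derivative $p'$ has degree $d-1$ with leading coefficient $d$ and its roots are exactly the critical points of $p$: $p'(z)=d\prod_{k=1}^{d-1}(z-c_k)$. The hypothesis $\dist(\{z_1,\ldots,z_d\},\Crit(p))>\epsilon$ then yields directly
$$|p'(z_i)|>d\,\epsilon^{d-1};$$
incidentally this also guarantees that the $z_k$ are simple roots of $p(z)-\alpha$, so the product in the denominator has $d-2$ nonzero factors. For the denominator, since every $z_k\in\mathbb{D}_r(0)$ we have $|z_i-z_k|<2r$, hence $\prod_{k\neq i,j}|z_i-z_k|<(2r)^{d-2}$.

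Combining the two bounds gives
$$|z_i-z_j|>\frac{d\,\epsilon^{d-1}}{(2r)^{d-2}}.$$
Since $\epsilon\in(0,1)$ and $r>1$, and since $d^2\geq d-1$ as well as $d^2\geq d-2$, we have $(\epsilon/r)^{d^2}\leq\epsilon^{d-1}/r^{d-2}$, so the right-hand side dominates $K(\epsilon/r)^{d^2}$ for any constant $K\leq d/2^{d-2}$, which depends only on $d$ as required. The main obstacle is essentially nil: the exponent $d^2$ in the claimed estimate is far from sharp, and the elementary identity $p'(z_i)=\prod_{k\neq i}(z_i-z_k)$ absorbs all the difficulty, leaving ample slack in the bookkeeping.
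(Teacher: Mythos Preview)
Your proof is correct and is a close cousin of the paper's argument, but more economical. Both routes rest on the same two ingredients: the lower bound $|p'(z_n)|\geq d\epsilon^{d-1}$ coming from the factorization $p'(z)=d\prod_k(z-c_k)$, and the upper bound $|z_i-z_k|<2r$. The paper packages these via the discriminant identity
\[
\Delta_q=\prod_{i<j}(z_i-z_j)^2=(-1)^{d(d-1)/2}\prod_n p'(z_n),
\]
bounds $|\Delta_q|\geq d^d\epsilon^{d(d-1)}$, and then divides out the $d(d-1)-1$ remaining linear factors (each $<2r$) to isolate $|z_1-z_2|$. You instead use the single-root identity $p'(z_i)=\prod_{k\neq i}(z_i-z_k)$ directly and divide out only $d-2$ factors. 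Your route avoids the detour through the full discriminant and accordingly yields the sharper intermediate bound $d\,\epsilon^{d-1}/(2r)^{d-2}$ in place of the paper's $d^d\epsilon^{d(d-1)}/(2r)^{d(d-1)-1}$; either bound comfortably dominates $K(\epsilon/r)^{d^2}$.
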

\begin{proof}
	It is enough to proof the lemma for $i=1, j=2$.
	
	Let $q(z):=p(z)-\alpha=(z-z_1)\cdot...\cdot(z-z_d)$, and let $\Delta_q$ be the discriminant of $q$. Then
	$$\Delta_q=\prod_{i<j}(z_i-z_j)^2=(-1)^{\frac{d(d-1)}{2}} p'(z_1)p'(z_2)\cdot...\cdot p'(z_d).$$
	
	If $p'(z)=d(z-c_1)\cdot...\cdot(z-c_{d-1})$, then for $n\in\{1,...,d\}$ we obtain $\abs{p'(z_n)}\geq d\epsilon^{d-1}$. Hence
	$$\abs{\Delta_q}\geq d^d \epsilon^{d(d-1)}.$$
	
	Finally,
	$$\abs{z_1-z_2}=\frac{\abs{\Delta_q}}{\abs{\Delta_q/(z_1-z_2)}}>\frac{\abs{\Delta_q}}{(2r)^{d(d-1)-1}}\geq\frac{d^d \epsilon^{d(d-1)}}{(2r)^{d(d-1)-1}}>K\left(\frac{\epsilon}{r}\right)^{d^2}$$
	where $K=d^d/2^{d(d-1)-1}.$
\end{proof}

Recall that for an entire function $g\in\mathcal{N}_d$ we denote by $\SV(g)$ the set of its finite singular values.

\begin{lmm}[Distance between preimages under $f\in\nd$]
	\label{lmm:distance_between_preimages_under_function}
	Fix an integer $d>1$. For all $\epsilon\in(0,1)$ and all big enough $\rho>0$ holds the following statement.
	
	If  $p$ is a monic polynomial of degree $d$ and $g=p\circ\exp$ satisfy $\SV(g)\subset\mathbb{D}_\rho(0)$, and $\alpha\in\mathbb{D}_{2\rho}(0)$ is such that $\dist(\alpha,\SV(g))>\epsilon$, then for every pair of distinct complex numbers $w_1,w_2\in g^{-1}(\alpha)$ holds
	$$\abs{w_1-w_2}>\left(\frac{\epsilon}{\rho}\right)^{d^4}.$$  
\end{lmm}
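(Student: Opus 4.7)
The plan is to translate the problem from $g=p\circ\exp$ back to the polynomial $p$ and apply Lemma~\ref{lmm:dist_between_preimages_under_polynomial}. Write $z_i:=e^{w_i}$ for $i=1,2$; these are roots of $p(z)=\alpha$. If $z_1=z_2$, then $w_1-w_2\in 2\pi i\mathbb{Z}\setminus\{0\}$, so $\abs{w_1-w_2}\geq 2\pi$, which already dominates $(\epsilon/\rho)^{d^4}$ for $\rho$ large. So I may assume $z_1\neq z_2$.

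First I would localize everything in a disk of controlled size. Since $\alpha\in\mathbb{D}_{2\rho}(0)$, Lemma~\ref{lmm:complement_of_big_disk_under_preimage} applied with $r=2\rho$ gives $z_i\in\mathbb{D}_{2\rho}(0)$. Similarly, every critical value $p(c)$ of $p$ lies in $\SV(g)\subset\mathbb{D}_\rho(0)$, so the same lemma (with $r=\rho$) places the critical points $\Crit(p)$ inside $\mathbb{D}_\rho(0)$.

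Next, I would convert the hypothesis $\dist(\alpha,\SV(g))>\epsilon$ into a separation between the $z_i$ and $\Crit(p)$. For any critical point $c\in\Crit(p)$ the value $p(c)$ belongs to $\SV(g)$, so $\abs{p(z_i)-p(c)}>\epsilon$. By Lemma~\ref{lmm:bounds_of_coefficients_through_SV}, the coefficients of $p$ are bounded by $L\rho^{(d-k)/d}$, hence on the convex set $\mathbb{D}_{2\rho}(0)$ (which contains the segment from $z_i$ to $c$) one has $\abs{p'}\leq C\rho^{d-1}$ for some $C=C(d)$. The fundamental theorem of calculus along this segment yields
$$\dist(z_i,\Crit(p))>\frac{\epsilon}{C\rho^{d-1}}=:\tilde\epsilon.$$
For $\rho$ large enough $\tilde\epsilon\in(0,1)$, so Lemma~\ref{lmm:dist_between_preimages_under_polynomial} with $r=2\rho$ applies and gives
$$\abs{e^{w_1}-e^{w_2}}=\abs{z_1-z_2}>K\left(\frac{\tilde\epsilon}{2\rho}\right)^{d^2}\geq K'\left(\frac{\epsilon}{\rho^d}\right)^{d^2}$$
for some constant $K'=K'(d)$.

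Finally I would pass from $z$ to $w$. Since $\abs{e^{w_i}}\leq 2\rho$, the real parts satisfy $\Re w_i\leq\log(2\rho)$, and integrating $e^w$ along the straight segment from $w_1$ to $w_2$ gives $\abs{e^{w_1}-e^{w_2}}\leq 2\rho\abs{w_1-w_2}$. Combining,
$$\abs{w_1-w_2}>\frac{K'}{2\rho}\left(\frac{\epsilon}{\rho^d}\right)^{d^2}=\frac{K'\epsilon^{d^2}}{2\rho^{d^3+1}}.$$
Because $d^4-d^3-1>0$ for $d\geq2$ and $\epsilon<1$, for all sufficiently large $\rho$ the right-hand side exceeds $(\epsilon/\rho)^{d^4}$, completing the proof. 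The only subtle step is step~(c) — turning $\dist(\alpha,\SV(g))>\epsilon$ into a separation of the $z_i$ from $\Crit(p)$ — where I need the correct $\rho$-dependent bound on $\abs{p'}$ from Lemma~\ref{lmm:bounds_of_coefficients_through_SV}; the remaining estimates are a direct unwinding.
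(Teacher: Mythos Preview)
Your proof is correct and follows essentially the same route as the paper's: localize $p^{-1}(\alpha)$ and $\Crit(p)$ via Lemma~\ref{lmm:complement_of_big_disk_under_preimage}, turn $\dist(\alpha,\SV(g))>\epsilon$ into a lower bound on $\dist(p^{-1}(\alpha),\Crit(p))$ using the $|p'|$-bound from Lemma~\ref{lmm:bounds_of_coefficients_through_SV}, apply Lemma~\ref{lmm:dist_between_preimages_under_polynomial}, and finally divide by $\max|\exp'|=2\rho$. Your treatment is in fact slightly cleaner in two respects: you explicitly dispose of the case $e^{w_1}=e^{w_2}$ (which the paper's final inequality tacitly skips), and your $|p'|\leq C\rho^{d-1}$ is a hair sharper than the paper's $\rho^d$, though both suffice.
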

\begin{proof}
	Pick $\zeta\in p^{-1}(\alpha)$, and let $c$ be a critical point of $p$. Due to Lemma~\ref{lmm:complement_of_big_disk_under_preimage}, for all $\rho$ big enough we have $\abs{\zeta}<2\rho$ and $\abs{c}<2\rho$. But then
	$$\abs{\zeta-c}>\frac{\dist(\alpha,\SV(g))}{\max\limits_{\abs{z}<2\rho}\abs{p'(z)}}>\frac{\epsilon}{\max\limits_{\abs{z}<2\rho}\abs{p'(z)}}.$$
	From Lemma~\ref{lmm:bounds_of_coefficients_through_SV} follows that for all $\rho$ big enough
	$$\max\limits_{\abs{z}<2\rho}\abs{p'(z)}<\rho^d.$$
	Hence, if $\zeta_1,\zeta_2\in p^{-1}(\alpha)$, from Lemma~\ref{lmm:dist_between_preimages_under_polynomial} follows that
	$$\abs{\zeta_1-\zeta_2}>K\left(\frac{\dist(p^{-1}(\alpha),\Crit(p))}{2\rho}\right)^{d^2}>\frac{K}{2^{d^2}}\left(\frac{\epsilon}{\rho^{d+1}}\right)^{d^2}.$$
	Further, if $e^{w_1}=\zeta_1$ and $e^{w_2}=\zeta_2$, then for big $\rho$
	$$\abs{w_1-w_2}>\frac{\abs{\zeta_1-\zeta_2}}{\max\limits_{\Re z<\log 2\rho}\abs{\exp'z}}>\frac{K}{2^{d^2}}\left(\frac{\epsilon}{\rho^{d+1}}\right)^{d^2}\frac{1}{2\rho}>\left(\frac{\epsilon}{\rho}\right)^{d^4}.$$
\end{proof}

\section{Acknowledgements}

We would like to express our gratitude to our research team in Aix-Marseille Universit\'e, especially to Dierk Schleicher who supported this project from the very beginning, Sergey Shemyakov who carefully proofread all drafts, as well as to  Kostiantyn Drach, Mikhail Hlushchanka, Bernhard Reinke and Roman Chernov for uncountably many enjoyable and enlightening discussions of this project at different stages. We also want to thank Dzmitry Dudko for his multiple suggestions that helped to advance the project, Lasse Rempe for his long list of comments and relevant questions, and Adam Epstein for important discussions especially in the early stages of this project. 

Finally, we are grateful to funding by the Deutsche Forschungsgemeinschaft DFG, and the ERC with the Advanced Grant “Hologram” (695621), whose support provided excellent conditions for the development of this research project. This research received support from the European Research Council (ERC) under the European Union Horizon 2020 research and innovation programme, grant 647133 (ICHAOS).

\vspace{1em}
Aix-Marseille Universit\'e, France

\textit{Email:} bconstantine20@gmail.com

\end{document}